\newtheorem{thm}{Theorem}[section]
\newtheorem{teo}[thm]{Theorem}
\newtheorem{cor}[thm]{Corolary}
\newtheorem{prop}[thm]{Proposition}
\newtheorem{lemaN}[thm]{Lemma}
\theoremstyle{definition}
\newtheorem{defi}[thm]{Definition}
\newtheorem{obs}[thm]{Remark}
\newtheorem{obss}[thm]{Remarks}
\newtheorem{notacion}[thm]{Notation}
\newtheorem{defis}[thm]{Definition}
\newtheorem{coment}[thm]{Coment}
\newtheorem{General notions}[thm]{General notions}
\newtheorem{Los jets de orden 1}[thm]{Jets of order 1}
\newtheorem{Terminologia y notaciones}[thm]{Terminology and notation}
\newtheorem{Notaciones y terminologia}[thm]{Notation and terminology}
\newtheorem{nada}[thm]{}
\newtheorem{El ejemplo de los espacios cotangentes de orden superior}[thm]{The higher order cotangent spaces example}
\newtheorem{Aplicacion tangente a un morfismo en un jet}[thm]{Tangent map to a morphism at a jet}
\newtheorem{Dual de la aplicacion tangente}[thm]{Dual of a tangent map}
\newtheorem{El ejemplo de los jets de orden 1}[thm]{Order 1 jets example}
\newtheorem{Functorialidad}[thm]{Functoriality}
\newtheorem{Modulo tangente en un Apunto}[thm]{Tangent module at an $A$-point}
\newtheorem{El teorema es un caso particular del teorema de Weil}[thm]{Theorem \ref{tangenteApuntos} is a particular case of Weil theorem}
\newtheorem{Producto cartesiano de Apuntos Prolongacion de grupos de Lie y fibrados asociados}[thm]{Cartesian product of $A$-points. Prolongatons of Lie groups and associated fiber bundles}
\newtheorem{Un ejemplo el grupo tangente a un grupo de Lie}[thm]{An example: the tangent group of a Lie group}
\newtheorem{La estructura diferenciable de MA}[thm]{The differentiable structure of $\M^A$}
\newtheorem{Prolongacion de ideales}[thm]{Prolongation of ideals}
\newtheorem{Prolongacion de jets Relacion con los sistemas de contacto}[thm]{Prolongation of jets. Relationship with the contact system}
\numberwithin{equation}{section}
\def\C{\mathcal {C}}
\def\A{{\mathcal{C}}^\infty(M)}
\def\B{\mathcal {C}^\infty(N)}
\def\M{\sl{\mathds{M}}}
\def\N{\sl{\mathds{N}}}
\def\T{\sl{\mathds{T}}}
\def\X{\sl{\mathds{X}}}
\def\Z{\sl{\mathds{Z}}}
\def\DD{\sl{\mathds{D}}}
\def\D{\mathcal {D}}
\def\R{\mathbb R}
\def\m{\mathfrak m}
\def\p{{\mathfrak p}}
\def\q{\mathfrak q}
\def\To{\longrightarrow}
\def\vphi{\varphi}
\def\Der{\textrm{Der}\,}
\def\cM{\check M}
\def\cU{\check U}
\begin{document}

\title[Primary spectrum of $\A$ and jets theory]
 {Primary spectrum of $\A$ and jets theory}

\author{R. J.  Alonso-Blanco and J.  Mu\~{n}oz-D{\'\i}az}

\address{Departamento de Matem\'{a}ticas,
  Universidad de Salamanca, Plaza de la Merced 1-4, E-37008 Salamanca,
  Spain.}
\email{ricardo@usal.es, clint@usal.es}

\maketitle

\begin{abstract}
We consider, for each smooth manifold $M$, the set $\M$ comprised by all the primary ideals of $\A$ which are closed and whose radical is maximal. The classical Lie theory of jets (jets of submanifolds) must be extended to $\M$ in order to have nice functorial properties. We will begin with the purely algebraic notions, referred always to the ring $\A$. Subsequently it will be introduced the differentiable structures on each jets space of a given type. The theory of contact systems, which generalizes the classical one, has a part purely algebraic and another one which depends on the differentiable structures.
\end{abstract}

\bigskip

\vskip 1cm

\section*{Introduction}

Jets spaces appear in the work of S. Lie. The ``elements'' of his contact transformations are 1-jets of hypersurfaces. In the theory of differential invariants (see \cite{Lie}, volume 1, \S 130, for example) each group of transformations is prolonged from the base manifold to all the spaces of jets of submanifolds, in order to obtain all of the differential invariants of the group. Without stopping in formalizing the theory, it is clear that Lie thinks about jets as points and  about spaces of jets of a given type as ``prolongations'' of the given base-manifold. Even the notation he uses for the coordinates are the current ones.

Since the 1950s jets spaces are systematically used, considered its elements as jets of morphisms of manifolds and, more particularly, as jets of sections of fiber bundles (\cite{Saunders}, for example). Although frequently this view is convenient in many applications, facilitating calculations in coordinates, the point of view of Lie (jets of submanifolds) is preferable when one wants to put the theory into a general way, looking at the demands of the structure, rather than applications.

Given a smooth manifold $M$, a closed submanifold $X$ of dimension $m$ and a point $p\in X$, the jet of order $\ell$ of $X$ at $p$ is the ideal $I(X)+\m_p^{\ell+1}$ of $\A$ (we have denoted by $I(X)$ the ideal of functions vanishing on $X$, and by $\m_p$ the one consisting of functions null at $p$). The factor ring of $\A$ by the mentioned ideal is isomorphic with $\R_m^\ell=\R[x^1,\dots,x^m]/(x^1,\dots,x^m)^{\ell+1}$, the ring of Taylor expansions of $m$ variables truncated at order $\ell$, and the law associating with each $f\in\A$ its factor class in $\R_m^\ell$ is the Taylor expansion of order $\ell$ of $f|_X$ at $p$ (with respect to a choice of coordinates depending on the chosen isomorphism between the factor ring and  $\R_m^\ell$). If we denote by $\M_m^\ell$ the set of ideals of $\A$ whose remainder quotient rings are isomorphic to $\R_m^\ell$, $\M_m^\ell$ is identified with the jets of order $\ell$ of $m$-dimensional submanifolds of $M$, in the sense of Lie.

The assignment $M\rightsquigarrow\M_m^\ell$ is not functorial. This is the first reason to a good general theory of jets must to take into consideration not only jets of submanifolds (the ``classical'' jets of Lie), but, at least, all of the primary ideals of $\A$ which are closed for the topology of that ring and have maximal radical. That set, $\M$, is not a manifold, but the union of disjoint manifolds, one for each  ``Weil algebra'' (= local, rational and finite dimensional, as a $\R$-vector space, $\R$-algebra). The assignation $M\rightsquigarrow\M$ is already functorial

In \cite{Weil} Weil introduced a notion which generalizes that the point of a manifold: for each manifold $M$ and each Weil algebra $A$, an  $A$-point of $M$ is an morphism $p^A\colon\A\to A$. In particular, when $A=\R$, the $\R$-points are the usual points of $M$; for $A=\R_1^1$, the $A$-points are the tangent vectors; when $A=\R_n^1$ ($n=\dim M$) the ``regular'' $A$-points (exhaustive morphisms) are the frames on the $T_pM$. In general, in the currently usual terminology,  $\R_m^\ell$-points are jets of order $\ell$ of differentiable maps from $\R^m$ to $M$: $J^\ell_o(\R^m,M)=M_m^\ell$, in Weil notation.

For each Weil algebra $A$, the set of $A$-points of $M$ is a manifold $M^A$, and we have a canonical map $M^A\to\M$, which assigns with each $A$-point $p^A$ its kernel $\p=\textrm{Ker}\,p^A$. If we denote by $\cM^A$ the set of those $p^A$ which are exhaustive, its image $\M^A$ in $\M$ is another manifold, and the canonical map $\cM^A\to\M^A$, $p^A\mapsto\p=\textrm{Ker}\,p^A$, is a principal fiber bundle with structural group $\textrm{Aut}\,A$.

The relationship between Weil $A$-points and primary ideals of $\A$ is the same as the existent between points of algebraic manifolds in the classical  \cite{Foundations} and the current  Grothendieck schemes (which germinal idea is already in the work of Dedekind (\cite{Dirichlet} supplement XI,\cite{Dedekind-Weber}), contemporary of Lie).

In this work we present first the algebraic aspects of jets theory and, subsequently, the differentiable structures. In principle, all of the notions are referred to the ring $\A$. After of introducing the differentiable structure on each jets space $\M^A$, the new manifolds ones can play the role of the initial  $M$; for example, in the prolongation of systems of partial differential equations.

In the bibliography are cited a collection of papers where the consideration of jets of $M$ as ideals of $\A$ has been applied  to several problems, most of them related to works of Lie.

In order to make the exposition self-contained we have detailed some results included in previous publications (in particular, \cite{A1}, \cite{AM}, \cite{Weil})

\section{Jet spectrum of a manifold}

\begin{General notions}

 Let $M$ be a smooth manifold, $\A$ its ring of infinitely differentiable functions. We will call \emph{jet} of  $M$ to each ideal
    $\p$ of $\A$ such that the quotient $\A/\p$ is a finite dimensional, as an $\R$-vector space, rational and local $\R$-algebra; in short, a \emph{Weil algebra}.

We will denote by $\mathds{M}$ the set comprised by all jets of the manifold $M$ and call it \emph{jets spectrum} of $\A$ or, shortly, \emph{jet spectrum} of $M$.

Given a morphism of manifolds $\varphi\colon M\to N$, the morphism of rings $\varphi^*\colon \B\to\A$ determines a map, again denoted again $\varphi\colon\M\to\N$, which assign to the  jet $\p\in\M$ the ideal $\varphi(\p)=\varphi^{*-1}(\p)$ of $\B$, which is also a jet, because  $\B/\varphi(\p)$ is injected by means of $\varphi^*$ as a subalgebra of $\A/\p$.

The composition of morphisms of manifolds  corresponds to the composition of maps between jet spectra. Therefore, the assignation  $M\rightsquigarrow\M$ is a covariant functor from the category of manifolds to the category of sets.

Each point $p\in M$ determines a maximal ideal $\m_p$ of $\A$: the ideal of the $f\in\A$ such that $f(p)=0$; it holds $\A/\m_p=\R$. The converse is a classical result: every maximal ideal of $\A$ whose field of remainders is $\R$ comes from a point $p\in M$. In this way, manifold $M$ is recoverable as the $\R$-spectrum of $\A$. The topology of $M$ is that of Zariski as a part of the maximal spectrum of $\A$.

Let $\p$ be an arbitrary  jet of $M$; the quotient $\A/\p$ is a Weil algebra, whose maximal ideal has the form $\m/\p$, where $\m$ is a maximal ideal of $\A$ whose residue field is $\R$; for this reason, $\m$ is of the form $\m_p$ form some $p\in M$. The jet $\p$ is contained in (a unique) maximal ideal of the form $\m_p$, with $p\in M$. Following the terminology suggested by that of  Weil \cite{Weil}, we will say that $\p$ is a \emph{jet near}  $p$ or a \emph{jet} at $p$. Thus, we have a canonical map ${\M}\to {M}$, ($\p\mapsto p\,|\,\m_p\supseteq\p$), which induces the identity  on $M\subseteq\M$.

If $A$ is a Weil algebra, we call \emph{$A$-point of $M$} each morphism of  $\R$-algebras $p^A\colon \A\to A$ (Weil \cite{Weil}). If $\m_A$ is the maximal ideal of $A$, it holds $A/\m_A=\R$. The composition of morphism $p^A$ with the morphism quotient by $\m_A$ gives a morphism $\A\to\R$, and so a point $p\in M$. We will say that the  \emph{$A$-point $p^A$ is near $p$}.

Given a Weil algebra $A$, the powers of its maximal ideal $\m_A$ define a decreasing sequence of finite dimensional $\R$-vector spaces; such a sequence must be stationary: for some index $\ell$ is $\m_A^{\ell+1}=\m_A^{\ell+2}$. By the ``lema of Nakayama'' is, then,  $\m_A^{\ell+1}=(0)$. The first $\ell$ for which $\m_A^{\ell+1}=(0)$ is called \emph{order of $A$}. The dimension of the $\R$-vector space $\m_A/\m_A^2$ is named \emph{width of $A$}. The width of  $A$ is the minimum  number of generators of $A$ as an $\R$-algebra.

For each jet $\p$ of $M$, the  \emph{order} of $\p$ and the \emph{width} of $\p$ will be those of the Weil algebra $\A/\p$.

Given an ideal $I$ of $\A$ and a point $p\in M$, such that $I\subseteq \m_p$, we will call \emph{jet of order $\ell$ of $I$ at $p$} the ideal $I+\m_p^{\ell+1}$; such an ideal consists of the whole of $f\in\A$ whose Taylor expansion order $\ell$ at $p$ coincides with the one of some function in the ideal $I$. The classical theorem of Whitney \cite{Whitney} can be stated by saying that each ideal $I$ which is closed in the topology of $\A$, is the intersection of all the jets containing it. The converse is immediate: powers $\m_p^k$ are closed ideals of finite codimension in $\A$, from which all jets and their intersections are also closed.

Denote by $\R_m^\ell$ the Weil algebra $\R[x^1,\dots,x^m]/(x^1,\dots,x^m)^{\ell+1}$. If $X$ is a non singular, closed submanifold of dimension $m$ in $M$, and $p\in X$, we can take local coordinates $(x^1,\dots,x^m,y^1,\dots,y^r)$ in a neighborhood of $p$ in $M$ in such a way that local equations of $X$ will be $y^1=0,\dots,y^r=0$. We see, then, that $\A/I(X)+\m_p^{\ell+1}\simeq \R_m^\ell$. Jets $\p\in\M$ whose quotient rings $\A/\p$ are isomorphic to algebras   $\R_m^\ell$ will be called \emph{classical jets}. They are the jets of arbitrary order of closed (or locally closed) submanifolds of $M$. In particular, if $\pi\colon M\to N$ is a regular projection, for each section $\sigma\colon N\to M$, the image $\sigma(N)$ is a closed submanifold of  $M$ whose jet of order $\ell$ at each $p=\sigma(q)$ is usually named \emph{jet of order $\ell$ of the section $\sigma$ at the point $q$}, and denoted by  $j_q^\ell(\sigma)$. They are the classical jets of sections of fiber bundles.
\end{General notions}

\begin{Los jets de orden 1}\label{orden1}
 If $A$ is a Weil algebra of order $1$, we have $\m_A^2=0$ and $A\simeq\R_m^1$, where $m$ is the width of $A$. Hence, it is derived that each jet $\p$ of order 1 of the manifold $M$ is of the form $I(X)+\m_p^2$, where $X$ is a submanifold of $M$ which passes through the point $p$ and whose dimension equals the width of $\p$. The quotient $\p/\m_p^2\subseteq\m_p/\m_p^2\simeq T^*_pM$ is called \emph{contact system of $\M$ at $\p$} and completely characterizes the jet $\p$. The subspace $L_\p\subseteq T_pM$ annihilated by the contact system is the space tangent to $X$ at $p$. In this way, jets order $1$ of $M$ are conceivable as the subspaces of the tangent spaces of $M$ at each one of its points. Given the subspace $L_\p\subseteq T_pM$, the ideal $\p$ of $\A$ is the set of functions $f$ such that  $f(p)=0$ and $D_pf=0$ for all $D_p\in L_\p$.

The definition of the contact systems at every point $\p\in\M$ is one of the objectives of this work.
\end{Los jets de orden 1}

\begin{Terminologia y notaciones}\label{terminologia}

When an ideal $I$ of $\A$ will be contained in a jet $\p$, we will say that \emph{$I$ passes trough $\p$}. When $I=I(X)$ is the ideal of a submanifold (singular or not) and $I(X)\subseteq\p$, we will say that \emph{the manifold  $X$ passes through $\p$}.

If $A$ is a Weil algebra and it holds $\A/\p\simeq A$, we will say that  $\p$ is a \emph{jet of type $A$}. The set of all jets of $M$ of type $A$ will be denoted by $\M^A$ and also $J^A(M)$, when this notation is more handy. The set of jets of type $\R_m^\ell$ will be denoted by $\M_m^\ell$ or $J_m^\ell(M)$. When it be convenient to indicate that that a jet is of type  $A$, we will denote it by a superindex $\p^A$ or $\p_m^\ell$, in the cases of classical jets.

The set of all  $A$-points of $M$ will be denoted by $M^A$. When $A=\R_m^\ell$, we will put $M_m^\ell$.

For each Weil algebra $A$ we have the canonical map $M^A\to \M$, $p^A\mapsto \p=\ker p^A$. In general, the morphism $p^A$ is not exhaustive, for which its kernel  $\p$ can be not of type $A$, although always will be of type ``a subalgebra of A''.

When $p^A\colon\A\to A$ is exhaustive, we will say that $p^A$ is a \emph{regular $A$-point}; in such a case, $\p^A=\ker p^A$ is a jet of type $A$. We will denote by $\check{M}^A$
the set of regular  $A$-points $M$. The canonical map $M^A\overset{\ker}\to\M$ sends $\check{M}^A$ onto ${\M}^{A}$.

For a given Weil algebra $A$, each morphism of manifolds $\vphi\colon M\to N$ induces canonically a map of sets (same notation) $\vphi\colon M^A\to N^A$, defined by  $\vphi(p^A)=p^A\circ \vphi^*$. The assignation $M\rightsquigarrow M^A$ is a covariant functor from the category of manifolds to the category of sets (at this moment, we do not dispose yet of a differentiable structure for $M^A$) and the diagrams
\begin{equation*}
\xymatrix{
M^A\ar[r]^-{\vphi} \ar[d]_-\ker & N^A\ar[d]^-\ker\\
\M\ar[r]_-\vphi & \N
}
\end{equation*}
are commutative.

Obviously, each morphism of Weil algebras $\alpha\colon A\to B$ determines maps $\alpha\colon M^A\to M^B$. Fixing the manifold $M$, the assignation $A\rightsquigarrow M^A$ is a covariant functor from the category of Weil algebras to the category of sets (in fact, to that of manifolds, when we will dispose of the above mentioned  differentiable structures).

Let us suppose that $\alpha$ is exhaustive; put $I=\ker\alpha$. Let $p^A\in \check{M}^A$ and $\p^A=\ker p^A\in\M^A$. The image of $p^A$ by $\alpha$ is the $B$-point $p^B=\alpha\circ p^A$, whose kernel is $\p^B=(p^A)^{-1}I$ which, in general, depends on $p^A$, and not only on $\p^A$. For this reason, in general, the map $\check{M}^A\to \check{M}^B$, canonically associated to the morphism $\alpha$, does not project on a map $\M^A\to \M^B$. Let us see which is the condition for the existence of such a projection: every regular $A$-point $\overline{p}^A$, with the same jet as $p^A$, is of the form $\overline{p}^A=g\circ p^A$, for some $g\in\textrm{Aut}\,A$. Its image under $\alpha$ is $\overline{p}^B=\alpha\circ g\circ p^A$, whose jet is $\overline{\p}^B=(p^A)^{-1}(g^{-1}I)$, in such a way that the condition to be $\overline{\p}^B=\p^B$ is (since $p^A$ is exhaustive) to have $g(I)=I$. Therefore, the condition for the map $\check{M}^A\to\check{M}^B$ to go down to level of jets, completing and making commutative the diagram
\begin{equation*}
\xymatrix{
\check{M}^A\ar[r]^-{\alpha} \ar[d]_-\ker & \check{M}^B\ar[d]^-\ker\\
\M^A\ar@{-->}[r] & \M^B,
}
\end{equation*}
  is that the ideal $I=\ker\alpha$ to be stable under the action of the group $\textrm{Aut}\,A$. In such a case, we have a canonical group morphism $\textrm{Aut}\,A\to\textrm{Aut}\,B$, whose kernel is the group $G_\alpha=\{g\in\textrm{Aut}\,A\,|\,\alpha\circ g=\alpha\}$.

\end{Terminologia y notaciones}

\section{Tangent and cotangent modules at a jet}

The justification of the following definitions will appear when we will study the differentiable structures on jet spaces.

\begin{defi}\label{modulotangente}
For a given $\p\in\M$ we will call \emph{tangent module to the manifold $M$ at $\p$}, and we denote it by $T_\p M$, the quotient
\begin{equation*}
T_\p M:=\Der(\A,\A/\p)/\Der(\A/\p,\A/\p)
\end{equation*}
Let us denote $\T M:=\{T_\p M\,|\,\p\in\M\}$.
\end{defi}

Let $\alpha\colon A\to B$ be an exhaustive  morphism of Weil algebras, whose kernel $I$ is stable under the group $\textrm{Aut}\,A$. The Lie algebra of this group is $\Der(A,A)$. We deduce that, for every derivation  $\delta\colon A\to A$, it holds $\delta I\subseteq I$, so that  $\delta$ projects onto a derivation $\overline\delta\colon B\to B$.

On the other hand, by considering $A=\A/\p^A$, via a $p^A\in\check M^A$ whose kernel is  $\p^A$,  and analogously for $B$, via $p^B=\alpha\circ p^A$, we see that the natural map
\begin{align*}
\Der_{p^A}(\A,A)&\To\Der_{p^B}(\A,B),\\
   D_{p^A} &\longmapsto\alpha\circ D_{p^A}
\end{align*}
sends $\Der(A,A)$ to $\Der(B,B)$, so that it passes to the quotient, giving $T_{\p^A}M\to T_{\p^B}M$.

\begin{Notaciones y terminologia}\label{terminologia2}

Let us put $\D(M)=\A$-module of all the tangent fields on $M$. When there is no risk of confusion, we will put $\D$ instead of $\D(M)$.

For each $D\in\D$ and each $\p\in\M$, we will denote by  $D_\p$ the derivation from $\A$ to $\A/\p$ which is the composition of the derivation  $D$ with the pass to the quotient. We will call $D_\p$ the \emph{value of the field $D$ at the jet  $\p$}.
 The class of $D_\p$ in the quotient module $T_\p M$ will be denoted by $\DD_\p$; so, the derivation $D_\p$ is a \emph{representative of the tangent vector $\DD_\p$}.

For each ideal $I$ of $\A$ (in particular, for each jet) we will put $\D(I)=\{D\in\D\,|\,D(I)\subseteq I\}$. Fields $D\in\D(I)$ will be said to be  \emph{tangent to
$I$}. When $I$ is the ideal of a manifold $X$, the tangent fields to $I(X)$ will be said to be \emph{tangent to $X$}.

We will say that a field $D\in\D$ is \emph{tangent to the ideal $I$  at the jet $\p$} when  $I$ passes through $\p$ and, in addition, $DI\subseteq\p$. When $I$ is the ideal of a manifold $X$ and $D$ is tangent to $I$ at $\p$, we will say that \emph{$D$ is tangent to $X$ at $\p$}.

According to the Whitney theorem \cite{Whitney}, the field $D$ is tangent to the closed ideal $I$ if and only if it is tangent to $I$ at every jet by  which it passes through.
\end{Notaciones y terminologia}

\begin{prop}\label{tangentejets}
All derivation $\A\to\A/\p$ is the value at $\p$ of a field $D\in\D$. Derivations from $\A/\p$ to itself are of the form $D_\p$, for an appropriate $D\in\D(\p)$. The module tangent to $M$ at $\p$ is
$$T_\p M=\D(M)/\D(\p).$$
\end{prop}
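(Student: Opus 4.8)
My plan is to prove the three assertions by establishing the first two and reading off the third. Write $\ell$ for the order of $\p$ and $p\in M$ for the point with $\p\subseteq\m_p$, so that $\m_p^{\ell+1}\subseteq\p$ and $\A/\p$ is the finite-dimensional local algebra with maximal ideal $\m_p/\p$. The map at issue is $\D\to\Der(\A,\A/\p)$, $D\mapsto D_\p$. I would first show it is surjective (this is the substantive part), then identify the preimage of $\Der(\A/\p,\A/\p)$ — viewed inside $\Der(\A,\A/\p)$ by precomposition with the projection $\A\to\A/\p$ — with $\D(\p)$, and finally invoke the first isomorphism theorem.

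For surjectivity the two ingredients are \emph{locality} and \emph{finiteness}. Locality: for any derivation $\delta\colon\A\to\A/\p$ and any $f$ vanishing on a neighborhood of $p$, one has $\delta(f)=0$; the usual bump-function argument gives this, choosing $\phi$ with $\phi(p)=1$ supported where $f$ vanishes, so that $\phi f=0$, expanding by Leibniz, and using that the class of $\phi$ is a unit of $\A/\p$ while the class of $f$ is $0$ because $f\in\m_p^{\ell+1}\subseteq\p$. Hence $\delta(f)$ depends only on the germ of $f$ at $p$. Finiteness: a Leibniz computation shows $\delta(\m_p^{\ell+2})\subseteq(\m_p/\p)^{\ell+1}=0$, so $\delta$ factors through the finite jet $\A/\m_p^{\ell+2}$. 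Combining the two with Taylor's formula — every germ at $p$ agrees with its degree-$(\ell+1)$ Taylor polynomial in coordinates $x^1,\dots,x^n$ centered at $p$ modulo $\m_p^{\ell+2}$ — one sees that a derivation $\delta$ is completely determined by the values $\delta(x^1),\dots,\delta(x^n)\in\A/\p$. To realize a prescribed $\delta$ as some $D_\p$, I would lift each $\delta(x^i)$ to $a^i\in\A$ (the projection $\A\to\A/\p$ is onto), set $D=\sum_i a^i\,\partial/\partial x^i$ near $p$, and globalize with a bump function $\phi\equiv1$ near $p$; then $\phi D\in\D$ and $(\phi D)_\p(x^i)=\overline{a^i}=\delta(x^i)$, whence $(\phi D)_\p=\delta$ by the determinacy just established.

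The remaining parts are then formal. A derivation $\bar\delta\colon\A/\p\to\A/\p$, precomposed with the projection, is a derivation $\A\to\A/\p$ annihilating $\p$; by surjectivity it equals $D_\p$ for some $D\in\D$, and $D_\p(\p)=0$ says exactly $D\p\subseteq\p$, i.e.\ $D\in\D(\p)$; conversely every $D\in\D(\p)$ yields $D_\p$ descending to $\A/\p$. Thus $\D(\p)$ maps onto $\Der(\A/\p,\A/\p)$, and the kernel of the composite $\D\to\Der(\A,\A/\p)\to T_\p M$ is precisely $\{D\colon D_\p\text{ annihilates }\p\}=\D(\p)$; the first isomorphism theorem then gives $T_\p M=\D/\D(\p)$. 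The one genuine obstacle is the determinacy step inside surjectivity: one must be sure that locality together with the vanishing on $\m_p^{\ell+2}$ really pins a derivation down to its values on the coordinates, which is where Taylor's theorem with remainder in $\m_p^{\ell+2}$ and the nilpotency $(\m_p/\p)^{\ell+1}=0$ must be used in tandem; everything else is bookkeeping with Leibniz's rule and bump functions.
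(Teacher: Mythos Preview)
Your argument is correct and follows essentially the same route as the paper: reduce to the finite jet $\A/\m_p^{\ell+2}$ via the Leibniz rule and nilpotency, observe that a derivation is then determined by its values on coordinates (the paper phrases this as ``comes from a derivation of $\R[x^1,\dots,x^n]$ to itself''), and lift. You have simply made explicit the locality/bump-function details and the formal deduction of the last two assertions that the paper leaves to the reader.
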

\begin{proof}
Let $\ell$ be the order of $\p$, $n$ the dimension of $M$, $p\in M$ the point of which  $\p$ is nearby. Then, $\A/\p$ is a quotient of $\A/\m_p^{\ell+1}\simeq\R_n^\ell$, and all derivation $\A\to\A/\p$ sends $\m_p^{\ell+2}$ to $0$, so it is a a derivation from $\A/\m_p^{\ell+2}\simeq\R_n^{\ell+1}$ to a quotient of $\R_n^{\ell+1}$. It is easy to see that this derivation cames from a derivation of $\R[x^1,\dots,x^n]$ to itself, and from that its derived the first point of the statement. The remainder is a consequence of it.
\end{proof}

\begin{nada}

Each  $f\in\p$ determines a morphism which we will denote by $d_\p f$:
$$d_\p f\colon T_\p M\to\A/\p,\quad \DD_\p\mapsto D_\p f,$$
where $D_\p$ is an arbitrary representative of $\DD_\p$. If $D_\p f=0$ for all $f\in\p$, it holds $\DD_\p=0$, according to definitions, hence the collection
$$\{d_\p f\,|\,f\in\p\}\subseteq\textrm{Hom}_{\A}(T_\p M,\A/\p)$$
distinguishes points in $T_\p M$.

The condition for $d_\p f=0$ is that for all tangent field $D$ it holds $Df\in\p$.
\end{nada}

\begin{notacion}
For each jet $\p$, we will put:
$$\hat\p=\{f\in\p\,|\,\forall D\in\D,\, Df\in\p\}=\textrm{Ker}\,d_\p.$$
\end{notacion}

It is clear that $\hat\p$ is an ideal of $\A$ and that
$$\p^2\subseteq \hat\p\subseteq \p,$$
for which $\hat\p$ is also a jet, at the same point $p$ as $\p$.

\begin{defi}
We will call \emph{module cotangent to $M$ at $\p$} to
$$T_\p^*M:=\p/\hat\p\subseteq\textrm{Hom}(T_\p M,\A/\p).$$
The inclusion assigns to the class $[f]_{\textrm{mod}\,\hat\p}$ the $d_\p f$.
In general, that inclusion is not exhaustive.
\end{defi}

\begin{El ejemplo de los espacios cotangentes de orden superior}
In the simplest case, when $\p=\m_p$, it is clear that $T_\p M=T_pM$ and $\hat\m_p=\m_p^2,$ so that $T_\p^*M=\m_p/\m_p^2=T_p^*M$.

 Now let $\p=\m_p^{\ell+1}$. It is easy to see that $\D(\p)=\m_p\D$ in this case. Then $T_\p M=\D/\m_p\D=T_pM$ again. But now, $T_\p^*M$ does not coincide with $T^*_p M$, as we will see later.

It is easily checked that $\hat\p=\m_p^{\ell+2}$, so that $T_\p^*M=\m_p^{\ell+1}/\m_p^{\ell+2}$ is $\simeq\{$homogeneous forms of degree $\ell+1$ in the local coordinates around $p$\}. Each one of these forms $F(x^1,\dots,x^n)$ determines the morphism from $T_\p M$ to $\A/\p$ which sends each $\DD_\p$ (class of the field $D$) to $[DF]_{\textrm{mod}\,\m_p^{\ell+1}}$, value that depends only on the vector $D_p\in T_p M$. The correspondence $M\to\M_n^\ell$, $p\mapsto\m_p^{\ell+1}$, is bijective and when we will study the differentiable structures on jet spaces, we will proof that it is an isomorphism of manifolds. The tangent spaces are identified; by seeing only the structures of manifold, also the respective cotangent spaces at the corresponding point,  are identified. However, the degree $\ell$ distinguishes between those that we have called cotangent modules.

\end{El ejemplo de los espacios cotangentes de orden superior}

\begin{Aplicacion tangente a un morfismo en un jet}

Let $\varphi\colon M\to~N$
be a morphism of manifolds and $\varphi^*\colon\B\to \A$ the corresponding morphism between their rings of functions. For each jet $\p\in\M$, if $\q:=\varphi(\p)\in\N$ denotes its image by $\varphi$, we have an injection of rings $\varphi^*\colon \B/\q\to\A/\p$, by means of which we consider the first ring as a subring of the second one.

For each tangent field $D$ on $M$ and each $\p\in\M$ define the derivation of $\B$-modules
$$\varphi_*D_\p:=D_\p\circ\varphi^*\colon\B\to \A/\p.$$
When this derivation takes values in the subring $\B/\varphi(\p)$, we will say that \emph{there exists the map tangent to $\varphi$ at $\p$}, and we consider $\varphi_* D_\p$ as a derivation from $\B$ to $\B/\varphi(\p)$.
\end{Aplicacion tangente a un morfismo en un jet}

\begin{prop}\label{condiciontangente}
The necessary and sufficient condition for the existence of map tangent to the morphism $\varphi$ at the jet $\p$ is that, for all tangent field $D$ on $M$, it holds
$$D\varphi^*\B\subseteq\varphi^*\B+\p.$$
\end{prop}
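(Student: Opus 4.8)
The plan is to unwind the definition ``the derivation $\varphi_*D_\p$ takes values in the subring $\B/\varphi(\p)$'' and to observe that it translates literally into the stated containment; there is essentially nothing beyond a careful bookkeeping of what the subring $\B/\varphi(\p)$ looks like inside $\A/\p$.

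First I would pin down the image of the injection $\varphi^*\colon\B/\q\hookrightarrow\A/\p$, where $\q:=\varphi(\p)=\varphi^{*-1}(\p)$. This map sends a class $g+\q$ to $\varphi^*g+\p$, so its image is exactly the subring $(\varphi^*\B+\p)/\p$ of $\A/\p$. Consequently, to say that an element of $\A/\p$ lies in the subring $\B/\q$ is the same as to say that one (equivalently, any) of its representatives in $\A$ lies in $\varphi^*\B+\p$.

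Next I would evaluate the derivation on an arbitrary $g\in\B$. By the very definition of $\varphi_*D_\p$ and of the value $D_\p$ of the field $D$ at $\p$,
$$\varphi_*D_\p(g)=D_\p(\varphi^*g)=D(\varphi^*g)+\p.$$
By the previous step this value belongs to the subring $\B/\q$ if and only if $D(\varphi^*g)\in\varphi^*\B+\p$. Letting $g$ range over all of $\B$ shows that the single derivation $\varphi_*D_\p$ takes values in $\B/\q$ precisely when $D\varphi^*\B\subseteq\varphi^*\B+\p$; letting $D$ range over all tangent fields of $\D$ then yields the necessity and the sufficiency simultaneously, which is the assertion of the proposition.

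I do not expect a genuine obstacle: the whole content is the correct identification of $\B/\q$ with $(\varphi^*\B+\p)/\p$, after which the equivalence is a direct rewriting of the defining condition. I would only remark, for later convenience, that since $\p$ is an ideal of $\A$ and $\varphi^*\B$ a subring, the set of $g\in\B$ satisfying $D(\varphi^*g)\in\varphi^*\B+\p$ is an $\R$-subalgebra of $\B$ closed under the Leibniz rule, so in practice the condition need only be checked on an algebra-generating set of $\B$; this is not needed for the bare equivalence but clarifies how the criterion is verified in coordinates.
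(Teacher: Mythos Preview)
Your proof is correct and is exactly the unwinding the paper has in mind: the paper's own proof is just ``Immediate'', and your identification of the image of $\B/\q$ in $\A/\p$ with $(\varphi^*\B+\p)/\p$ followed by the direct rewriting of the condition $\varphi_*D_\p(g)\in\B/\q$ is precisely what ``immediate'' means here.
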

\begin{proof}
Immediate.
\end{proof}

Let us suppose that there exists map tangent to $\varphi$ at $\p$. Let $D$ be a field on $M$ such that $\DD_\p=0$; then $D$ maps $\p$ to $\p$, and so, applies $\varphi^*(\varphi(\p))\subseteq\p$ into $\p$, and $\varphi_*D_\p$ sends $\varphi(\p)$ to $(0)$. Therefore, the class of $\varphi_*D_\p$ in $T_{\varphi(\p)}N$ is $0$. From here it is derived the:
\begin{prop}\label{tangenteinducida}
If there exists a map tangent to morphism $\varphi$ at the jet $\p$, such a map canonically induces a morphism of $\B$-modules
$$\varphi_*\colon T_\p M\To T_{\varphi(\p)}N.$$
\end{prop}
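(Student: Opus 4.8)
The plan is to show that the assignment $\DD_\p\mapsto$ (class of $\varphi_*D_\p$) is well defined on the quotient $T_\p M=\Der(\A,\A/\p)/\Der(\A/\p,\A/\p)$ and lands in $T_{\varphi(\p)}N$, and that it is $\B$-linear. First I would fix a representative: given a tangent vector $\DD_\p\in T_\p M$, choose by Proposition \ref{tangentejets} a field $D\in\D(M)$ whose value at $\p$ represents $\DD_\p$. Since we are assuming the tangent map to $\varphi$ exists at $\p$, Proposition \ref{condiciontangente} guarantees $\varphi_* D_\p=D_\p\circ\varphi^*$ takes values in $\B/\varphi(\p)$, so it is a genuine derivation $\B\to\B/\varphi(\p)$, i.e. an element of $\Der_{\varphi(\p)}(\B,\B/\varphi(\p))$ whose class lives in $T_{\varphi(\p)}N$.

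The key step is well-definedness, which is exactly the computation sketched just before the statement. Two fields $D,D'$ representing the same $\DD_\p$ differ by a field whose value at $\p$ is zero, equivalently a field carrying $\p$ into $\p$; so it suffices to check that if $\DD_\p=0$ then the class of $\varphi_*D_\p$ in $T_{\varphi(\p)}N$ is $0$. If $\DD_\p=0$ then $D\in\D(\p)$, i.e. $D\p\subseteq\p$. Because $\varphi^*(\varphi(\p))\subseteq\p$ (this is just $\varphi(\p)=\varphi^{*-1}(\p)$ from the General notions), we get $D(\varphi^*\varphi(\p))\subseteq D\p\subseteq\p$, hence $\varphi_*D_\p=D_\p\circ\varphi^*$ sends $\varphi(\p)$ into $\p/\p=(0)$ inside $\B/\varphi(\p)$. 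Thus $\varphi_*D_\p$ is a derivation $\B/\varphi(\p)\to\B/\varphi(\p)$, i.e. it represents $0$ in $T_{\varphi(\p)}N$. This is the paragraph already supplied in the text, so the induced map is unambiguous.

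Finally I would record linearity over $\B$: for $g\in\B$ and $\DD_\p\in T_\p M$, the $\B$-module structure on $T_\p M$ is via $\varphi^*$, so $g\cdot\DD_\p$ is represented by $(\varphi^*g)\,D$, and $\varphi_*((\varphi^*g)D_\p)=(\varphi^*g)\,(D_\p\circ\varphi^*)$ restricted to $\B$ equals $g\cdot(\varphi_*D_\p)$ as derivations into $\B/\varphi(\p)$; additivity is immediate from the additivity of $D\mapsto D\circ\varphi^*$. Hence $\varphi_*\colon T_\p M\to T_{\varphi(\p)}N$ is a well-defined morphism of $\B$-modules.

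I expect the only genuine subtlety to be the bookkeeping of which ring each derivation takes values in, namely that passing to quotients by $\p$ and by $\varphi(\p)$ is compatible through the inclusion $\varphi^*\colon\B/\varphi(\p)\hookrightarrow\A/\p$; once that inclusion is used to regard $\varphi_*D_\p$ as landing in $\B/\varphi(\p)$ rather than merely in $\A/\p$, the quotient by $\Der(\A/\p,\A/\p)$ downstairs maps into the quotient by $\Der(\B/\varphi(\p),\B/\varphi(\p))$ and everything descends. No hard estimate or construction is required; the content is purely the diagram-chase that the displayed computation preceding the proposition already carries out.
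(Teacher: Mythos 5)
Your argument is correct and is essentially the paper's own: the well-definedness check (if $\DD_\p=0$ then $D\in\D(\p)$, so $\varphi_*D_\p=D_\p\circ\varphi^*$ kills $\varphi(\p)$ and hence lies in $\Der(\B/\varphi(\p),\B/\varphi(\p))$) is exactly the computation the paper carries out in the paragraph preceding the statement. Your added verification of $\B$-linearity is routine and only makes explicit what the paper leaves implicit.
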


Condition \ref{condiciontangente} for the existence of tangent map obviously holds when $\varphi^*\B+\p$ is the whole of ring $\A$, this is to say, when the morphism of taking quotient  $\A\to\A/\p$ restricted to the subring $\varphi^*\B$ is exhaustive. That holds, for example, when $M$ is a submanifold of $N$, and also when $\varphi$ is a regular projection and $\p$ is the jet of a section of it.

For brevity, we will say that a jet is \emph{proper or regular for a subring} $\mathcal{A}\subseteq\A$ when $\mathcal{A}+\p=\A$. Hence, given a morphism $\varphi\colon M\to N$, there exists a map tangent to $\varphi$ at any jet $\p\in\M$ which are regular for $\varphi^*\B$.

\begin{Dual de la aplicacion tangente}
Assume the existence of a map tangent to the morphism $\vphi$ at the jet $\p$. Put $\q=\vphi(\p)$.

If $f\in\hat\q$, for every tangent field $D$ on $M$ it must be $(\vphi_* D_\p)f=0$, so that $D_\p\vphi^*f=0$, from which we get $\vphi^*f\in\hat\p$.
It follows that  $\vphi^*\hat\q\subseteq\hat\p$, that is to say, $\hat\q\subseteq\vphi^{*-1}\hat\p=\vphi(\hat\p)$. It is derived that the morphism of rings, $\vphi^*$, induces a morphism of $\B$-modules $\vphi^*\colon\q/\hat\q\To\p/\hat\p$.

For each $f\in\q$ and each tangent vector $\DD_\p\in T_\p M$, we have $\langle\vphi_*\DD_\p,\, d_{\vphi(\p)}f\rangle=\langle\DD_\p,\, d_{\p}\vphi^*f\rangle$
($=D_\p\vphi^*f$, for any representative $D_\p$ de $\DD_\p$), by taking into account the identification of $\B/\vphi(\p)$ with its image in $\A/\p$.
\end{Dual de la aplicacion tangente}

\section{The contact system on $\M$}\label{seccioncontacto}

For each pair of jets $\p\subseteq\p'$ of $\M$ and each $f\in\p$ let us define the map
$$d_\p'f\colon T_\p M\To\A/\p'$$
by putting $$\langle d'_\p f,\DD_\p\rangle=[\DD_\p f]_{\textrm{mod}\,\p'}.$$

We will call  \emph{contact system of the pair $(\p,\p')$} the module
$$\Omega_{(\p,\p')}=\{d'_\p f\,|\,f\in\p\}\subseteq\textrm{Hom}(T_\p M,\A/\p').$$

\begin{obs} Each $f\in\p$ defines  $d_{\p'}f\in\textrm{Hom}(T_{\p'} M,\A/\p')$ and, for each given field $D$, it holds
$$\langle d_{\p'} f,\DD_{\p'}\rangle=\langle d'_\p f,\DD_\p\rangle,$$
however, the datum $\DD_\p$ does not determines, in general, to $\DD_{\p'}$, because it could not exist the map between the tangent spaces
$T_\p M\To T_{\p'}M$ associated to taking the quotient $\A/\p\to\A/\p'$; such a map exists when it holds $\D(\p)\subseteq\D(\p')$, which, in general, is not the case. Therefore, it must to distinguish between $d'_\p$ and $d_{\p'}$.
\end{obs}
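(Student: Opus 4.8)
The plan is to reduce both pairings to the same representative-level expression $[Df]_{\textrm{mod}\,\p'}$, and then to isolate the one genuinely nontrivial point, namely the failure of a natural map $T_\p M\To T_{\p'}M$ to exist.

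First I would check that the two objects are well posed. Since $\p\subseteq\p'$, any $f\in\p$ also lies in $\p'$, so $d_{\p'}f\in\textrm{Hom}(T_{\p'}M,\A/\p')$ is defined by the construction that precedes the notation $\hat\p$. For $d'_\p f$ I would verify that $[Df]_{\textrm{mod}\,\p'}$ depends only on the class $\DD_\p\in T_\p M=\D/\D(\p)$: if $D\in\D(\p)$ then $Df\in\p\subseteq\p'$, so the value is $0$, which is exactly what is needed for $d'_\p f$ to factor through $\D/\D(\p)$.

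Next I would establish the displayed identity. Fixing one field $D$ and letting $\DD_\p$, $\DD_{\p'}$ be its classes in $T_\p M$ and $T_{\p'}M$ respectively, both pairings unwind to $[Df]_{\textrm{mod}\,\p'}$: on one hand $\langle d_{\p'}f,\DD_{\p'}\rangle=D_{\p'}f=[Df]_{\textrm{mod}\,\p'}$, and on the other $\langle d'_\p f,\DD_\p\rangle=[Df]_{\textrm{mod}\,\p'}$ by definition. So for a common representing field the two agree; this is the whole content of the identity, and the emphasis ``for each given field $D$'' is precisely what makes it immediate.

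The substantive assertion, and the step I expect to be the real point, is that $\DD_\p$ does not determine $\DD_{\p'}$. A map $T_\p M\To T_{\p'}M$ compatible with the two quotient projections out of $\D$ would have to send the class of $D$ to the class of $D$, hence is well defined if and only if $D\in\D(\p)$ forces $D\in\D(\p')$, i.e. if and only if $\D(\p)\subseteq\D(\p')$. The main obstacle is to exhibit that this inclusion genuinely fails, since for nested powers of $\m_p$ (where both tangent modules are $\D/\m_p\D$) it does hold, so the phenomenon is neither vacuous nor automatic. I would settle it with an explicit pair: take $M=\R^2$ with coordinates $(x,y)$, $p$ the origin, $\p=\m_p^2$ and $\p'=(x)+\m_p^2$; then $\p\subseteq\p'$, while $D=y\,\partial_x$ satisfies $D\m_p^2\subseteq\m_p^2$ yet $Dx=y\notin\p'$, so $D\in\D(\p)\setminus\D(\p')$. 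For this $D$ one has $\DD_\p=0$ but $\DD_{\p'}\neq0$, so $\DD_\p$ cannot recover $\DD_{\p'}$, and $d'_\p$ must indeed be distinguished from $d_{\p'}$.
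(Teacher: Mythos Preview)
Your argument is correct. The paper offers no proof for this remark at all: the displayed identity is left as immediate from the definitions, and the claim that $\D(\p)\subseteq\D(\p')$ can fail is simply asserted. Your treatment is therefore strictly more complete than the paper's. You check well-definedness of $d'_\p f$ (if $D\in\D(\p)$ then $Df\in\p\subseteq\p'$), you unwind both pairings to the common value $[Df]_{\textrm{mod}\,\p'}$, and---most usefully---you supply an explicit counterexample on $\R^2$ with $\p=\m_p^2$, $\p'=(x)+\m_p^2$, and $D=y\,\partial_x$, which the paper omits entirely. That example is sound: $D\in\m_p\D\subseteq\D(\m_p^2)$, while $Dx=y\notin(x,y^2)=\p'$, so indeed $\DD_\p=0$ but $\DD_{\p'}\neq0$.
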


The contact system on $\M$ is defined by assigning to each jet $\p$ another one $\p'$ which contain it, as we will see now.

\begin{defis}\label{defcontacto}
 Let $A=\A/\p$ be a Weil algebra of order $\ell$ and width $m$. We define the \emph{Cartan system at $\p$} to be the submodule $\C_\p$ of $T_\p M$ generated by the tangent fields on $M$ which are tangent to some $m$-dimensional submanifold $X$ of $M$ passing through $\p$. Next, let us define the ideal $\p'$ \emph{derived of $\p$} to be:
$$\p':=\p+(D_\p\p\,|\,\DD_\p\in\C_\p)$$
and the \emph{contact system at $\p$} to be:
$$\Omega_\p:=\Omega_{(\p,\p')}=\{d'_\p f\,|\,f\in\p\}\subseteq\textrm{Hom}(T_\p M,\A/\p').$$
\end{defis}

\begin{El ejemplo de los jets de orden 1} (Compare with \ref{orden1})
A jet of order 1 and width $m$ is the jet of order 1 of a submanifold $X$ of dimension $m$ at one of its points $p$. Let us take local coordinates around $p$ in $M$, $(x^1,\dots,x^m,y^1,\dots,y^r)$, adapted to $X$ in such a way that $I(X)=(y^1,\dots,y^r)$, so that $\p=(y^1,\dots,y^r)+\m_p^2$. Local equations of an  $m$-dimensional submanifold  of $M$ passing trough $\p$ are of the form $y^j=f^j(x^1,\dots,x^m)$ with $f^j\in\m_p^2$. The local expression of a vector field tangent to that submanifold is a linear combination of
$$\frac{\partial}{\partial x^\alpha}+\frac{\partial f^j}{\partial x^\alpha}\frac{\partial}{\partial y^j},\quad (\alpha=1,\dots,m).$$
It is derived that $\p'=\m_p$. For each $f\in \p$,  $d'_\p f$ maps each $\DD_\p$ to $[Df]_{\textrm{mod}\,\p'}=D_p f$, where $D$ is an arbitrary field representative of the tangent vector $\DD_\p\in T_\p M$. It follows that $d'_\p f=d_p f$, the differential of $f$ at $p$. Hence, $\Omega_\p$ is the set of all $d_p f$, with $f\in\p$, this is to say, $\p/\m_p^2$. The subspace of $T_\p M$ which annihilates  $\Omega_\p$ is, therefore, the set of all $\DD_\p\in T_\p M$ whose projection $D_p$ on $T_p M$ annihilates the ideal $\p$; that projection is the subspace $L_\p\subseteq T_\p M$ of \ref{orden1}.
\end{El ejemplo de los jets de orden 1}

The preimage of $L_\p$ in $T_\p M$ contains the Cartan subspace $\C_\p$ because the latter is annihilated by $\Omega_\p$. Right away, we will see that $\C_\p$ is exactly the preimage of $L_\p$; that is a corollary of the following proposition:
\begin{prop}\label{contactoaniquilado}
$\C_\p$ is the submodule of $T_\p M$ annihilated by $\Omega_\p.$
\end{prop}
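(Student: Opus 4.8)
The plan is to prove the two inclusions separately; only one of them has content. Write $\Omega_\p^\perp$ for the submodule of $T_\p M$ annihilated by $\Omega_\p$. Unwinding the definition, $\DD_\p\in\Omega_\p^\perp$ means $\langle d'_\p f,\DD_\p\rangle=[Df]_{\bmod\,\p'}=0$ for every $f\in\p$, i.e. $D\p\subseteq\p'$ for one (hence any) representative $D$. Since by definition $\p'=\p+(D_\p\p\mid\DD_\p\in\C_\p)$, every $\DD_\p\in\C_\p$ satisfies $D\p\subseteq\p'$; this is the inclusion $\C_\p\subseteq\Omega_\p^\perp$ already observed before the statement. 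So the real content is $\Omega_\p^\perp\subseteq\C_\p$.

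For this I would work in adapted local coordinates. Because $\p$ has width $m$, I can choose coordinates $(x^1,\dots,x^m,y^1,\dots,y^r)$ centred at $p$ so that the classes of the $x^\alpha$ generate $\A/\p$ and $y^1,\dots,y^r\in\p$; then $\p\supseteq(y^1,\dots,y^r)+\m_p^{\ell+1}$ and $\A/\p\simeq\R_m^\ell/J$, where $J$ collects the purely-$x$ relations of $\p$ (and, since the width is $m$, has no linear part). As $\p\supseteq(y)$ and $\p$ has width $m$, the tangent plane at $p$ to any $m$-dimensional submanifold through $\p$ must be the horizontal plane $\{dy^1=\dots=dy^r=0\}$, so each such submanifold is, near $p$, a graph $y^j=\psi^j(x)$ with $\psi^j\in\p$, whose tangent fields are generated by $\partial_{x^\alpha}+(\partial_{x^\alpha}\psi^j)\partial_{y^j}$. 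Taking $\psi=0$ exhibits the horizontal fields $\partial_{x^\alpha}\in\C_\p$, and subtracting exhibits the vertical fields $(\partial_{x^\alpha}\psi^j)\partial_{y^j}\in\C_\p$. I would then describe $\C_\p$ explicitly as the $\A/\p$-submodule of $\D/\D(\p)$ generated by these, and compute the derived ideal $\p'$; for a classical jet this gives $\p'=(y)+\m_p^\ell$, hence $\A/\p'=\R_m^{\ell-1}$.

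Finally I would match $\Omega_\p^\perp$ to this submodule. Writing a representative $D=\sum a^\alpha\partial_{x^\alpha}+\sum b^j\partial_{y^j}$, the condition $D\p\subseteq\p'$ places no restriction on the horizontal coefficients (it holds automatically, since $D(\m_p^{\ell+1})\subseteq\m_p^\ell\subseteq\p'$), while it forces each $b^j$ to lie modulo $\p$ in precisely the top-order subspace swept out by the deformations $\psi^j$ above; comparing with the explicit description of $\D(\p)$, this says that $D$ is congruent modulo $\D(\p)$ to a combination of Cartan fields, so $\Omega_\p^\perp\subseteq\C_\p$. The main obstacle is the non-classical case: the relations $J$ among the $x$-coordinates simultaneously shrink the admissible $\psi$ (shrinking the vertical part of $\C_\p$) and enlarge $\p'$ (relaxing the annihilator condition), and one must check that these two effects compensate exactly. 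I expect this to come down to the fact that, in the Weil algebra $\R_m^\ell/J$, the first-order data recorded by $\p'$ is generated by the derivations tangent to the graphs; once both $\C_\p$ and $\Omega_\p^\perp$ are written out explicitly, the cleanest way to close the argument is an $\R$-dimension count showing that the two finite-dimensional subspaces of $T_\p M$ coincide.
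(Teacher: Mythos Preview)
Your setup matches the paper's: adapted coordinates with $\p=(y^1,\dots,y^r)+\m_p^{\ell+1}+(Q^h(x))$ (your ideal $J$ is the $(Q^h)$), graphs $y^j=\psi^j(x)$ with $\psi^j\in\p$, and the reduction of the annihilator condition to a condition on the vertical coefficients $b^j$. The easy inclusion and the observation that the horizontal coefficients $a^\alpha$ are unconstrained are both correct (though your justification via $D(\m_p^{\ell+1})\subseteq\m_p^\ell$ only handles one set of generators; one must also note $D(Q^h)=a^\alpha\,\partial Q^h/\partial x^\alpha\in\p'$, which is immediate once $\p'$ is computed).

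The genuine gap is in the hard inclusion. You correctly see that $D\p\subseteq\p'$ amounts to $b^j\in\p'$, but you do not show that every such $b^j$ (modulo $\p$) is realized by Cartan fields; you defer to an unexecuted dimension count and explicitly leave the non-classical case as an open obstacle. The paper closes this gap constructively and uniformly for all jets. First it computes $\p'$ exactly:
\[
\p'=(y^1,\dots,y^r)+\m_p^\ell+(Q^h)+\bigl(\partial Q^h/\partial x^\alpha\bigr).
\]
Then, after discarding from $b^j$ the terms already in $\p$ (which contribute fields in $\D(\p)$), it splits what remains and for each piece \emph{integrates} to produce an explicit graph through $\p$:
\begin{itemize}
\item For $b^j=f(x)$ homogeneous of degree $\ell$, pick $F$ of degree $\ell+1$ with $\partial F/\partial x^1=f$; then $F\in\m_p^{\ell+1}\subseteq\p$, so the graph $y^j=F$ passes through $\p$, its tangent $\partial_{x^1}+f\,\partial_{y^j}$ lies in $\C_\p$, and subtracting $\partial_{x^1}$ gives $f\,\partial_{y^j}\in\C_\p$.
\item For $b^j\in(\partial Q^h/\partial x^\alpha)$, write (after Leibniz and discarding terms in $\p$) $b^j=\sum_\alpha\partial H^{j\alpha}/\partial x^\alpha$ with $H^{j\alpha}\in\p$; the graph $y^j=H^{j\alpha}(x)$ passes through $\p$ and its tangent $\partial_{x^\alpha}+(\partial H^{j\alpha}/\partial x^\alpha)\,\partial_{y^j}$ supplies the needed vertical piece.
\end{itemize}
This antiderivative trick is exactly the missing idea. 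It shows directly that the two effects you worried about---the constraint $\psi^j\in\p$ restricting admissible graphs, versus the enlargement of $\p'$ by the derivatives $\partial_\alpha Q^h$---compensate perfectly, with no dimension count needed.
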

\begin{proof}
We can take local coordinates for $M$ around $p$, $\{x^1,\dots,x^m,y^1,\dots,y^r\}$ in such a way that:
$$\p=(y^1,\dots,y^r)+\m_p^{\ell+1}+\left(Q^h(x)\right),$$
 where the $Q^h$ are suitable polynomials in the $x$'s of degrees $\ge 2$ and $\le\ell$ (in case $\ell=1$, the $Q^h$ are 0). An $m$-dimensional  submanifold $X$ of $M$ passing through $\p$ also passes through $(y^1,\dots,y^r)+\m_p^2$, from which we see that it is locally parameterized by coordinates $x$. Its local equations must be of the form
$$y^i=f^i(x^1,\dots,x^m)+F^i(x^1,\dots,x^m),\quad i=1,\dots,r,$$
where $f^i\in\left(Q^h(x)\right)$ and $F^i\in\m_p^{\ell+1}$. Fields tangent to $X$ are combinations of the
$$\frac{\partial}{\partial x^\alpha}+\left(\frac{\partial f^i}{\partial x^\alpha}+\frac{\partial F^i}{\partial x^\alpha}\right)\frac{\partial}{\partial y^i}.$$

Deriving the elements of $\p$ by these fields it is seeing that
$$\p'=\p+\m_p^\ell+\left(\frac{\partial Q^h}{\partial x^\alpha}\right)=
  (y^1,\dots,y^r)+\m_p^\ell+(Q^h)+\left(\frac{\partial Q^h}{\partial x^\alpha}\right).$$

  Let $\DD_\p\in T_\p M$; let $D_\p$ be a derivation representative of $\DD_\p$.
  If $\DD_\p$ is killed by $\Omega_\p$, $D_\p$ sends $\p$ to $\p'$; in the local expression
    $$D= a^\alpha(x,y)\frac{\partial}{\partial x^\alpha}+b^j(x,y)\frac{\partial }{\partial y^j},$$
 that condition is equivalent to be $b^j\in\p'$ ($j=1,\dots,r)$.

By subtracting from $b^j$ terms in $\p$ (which contribute by fields tangent to $\p$), we can assume that each $b^j$ is a polynomial in the $x$ belonging to the ideal $\m_p^\ell+\left(\partial Q^h/\partial x^\alpha\right)$, and also that the term in $\m_p^\ell$ is homogeneous of degree $\ell$ in the $x$.

 Let us study separately each term of $D$:

 The $m$ fields $\partial /\partial x^\alpha$ are tangent to the manifold having as ideal $(y^1,\dots,y^r)$, which passes through $\p$.

 Let $f(x^1,\dots,x^m)$ homogeneous of degree $\ell$; let $F(x^1,\dots,x^m)$ homogeneous of degree $\ell+1$ with $\partial F/\partial x^1=f$. The field
 $\partial/\partial x^1+f(x^1,\dots,x^m)\partial/\partial y^j$ is tangent to the manifold of equations $y^1=0, \dots$, $y^j=F(x^1, \dots,x^m),\dots,$  $y^r=0$; so that its value at $\p$ belongs to $\C_\p$; thus, also $f\partial/\partial y^j$ do it.

 Finally, let $b^j\in\left(\partial Q^h/\partial x^\alpha\right)$; by deriving products and eliminating terms in $\p$, we can assume that $b^j=\partial H^{j\alpha}/\partial x^\alpha$ with $H^{j\alpha}\in\p$. The field
 $$D_1=\frac{\partial}{\partial x^1}+\frac{\partial H^{j1}}{\partial x^1}\frac{\partial}{\partial y^j}$$
  is tangent to the manifold of equations $y^j=H^{j1}(x^1,\dots,x^m)$ ($j=1,\dots,r$). It follows that $\left(\partial H^{j1}/\partial x^1\right)\partial/\partial y^j$ belongs to $\C_\p$. Analogously, when $x^1$ is replaced by other coordinates $x$. Finally we conclude that $D$ is the sum of fields whose values at $\p$ belong to $\C_\p$, so that $\DD_\p\in\C_\p$.
 \end{proof}

\begin{prop}\label{derivadappprima}
$\D(\p)\subseteq\D(\p')$. In consequence, there exists tangent map $T_\p M\to T_{\p'}M$ associated to the inclusion $\p\subseteq\p'$.
\end{prop}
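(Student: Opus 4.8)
The plan is to realize any $D\in\D(\p)$ as the infinitesimal generator of a local flow $\phi_t$ around $p$, to show that each $\phi_t$ preserves the ideal $\p'$, and then to recover $D\p'\subseteq\p'$ by differentiating at $t=0$. Everything is local near $p$, so the flow is available for small $t$.

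First I would check that the flow preserves $\p$ itself and fixes $p$. Since $D\p\subseteq\p$, the field $D$ induces an $\R$-linear endomorphism $\bar D$ of the finite-dimensional space $\A/\p$ with $[Dh]=\bar D[h]$. Because $D$ commutes with its own flow, for $g\in\p$ the class $v(t):=[\,\phi_t^*g\,]\in\A/\p$ satisfies the linear ODE $v'(t)=\bar D\,v(t)$ with $v(0)=[g]=0$, whence $v\equiv0$ and $\phi_t^*g\in\p$. Applying the same to $-D$ gives $\phi_t^*\p=\p$, and since $\p$ determines $p$ as the support of its unique maximal ideal $\m_p\supseteq\p$, the diffeomorphism $\phi_t$ fixes $p$.

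Next I would show $\phi_t^*\p'\subseteq\p'$ by acting on the generators of $\p'$. If $X$ is an $m$-dimensional submanifold through $\p$ and $E$ is tangent to $X$, then $\phi_{-t}(X)$ is again an $m$-dimensional submanifold through $\p$: from $I(X)\subseteq\p$ and $\phi_{-t}^*\p=\p$ one gets $g\in I(\phi_{-t}(X))\Rightarrow\phi_{-t}^*g\in I(X)\subseteq\p\Rightarrow g\in\p$, i.e. $I(\phi_{-t}(X))\subseteq\p$. The field $\phi_t^*E:=(\phi_{-t})_*E$ is tangent to $\phi_{-t}(X)$, so its class at $\p$ lies in $\C_\p$. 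By naturality $\phi_t^*(Ef)=(\phi_t^*E)(\phi_t^*f)$, and as $\phi_t^*f\in\p$, this is one of the generating elements $D_\p\p$ (with $\DD_\p\in\C_\p$) of $\p'$. Together with $\phi_t^*\p=\p\subseteq\p'$, this yields $\phi_t^*\p'\subseteq\p'$ for every small $t$. Finally, $\p'$ is a jet (it contains $\m_p^\ell$, hence is of finite codimension and closed), so the inclusions $\phi_t^*g\in\p'$ for $g\in\p'$ can be differentiated: $Dg=\lim_{t\to0}(\phi_t^*g-g)/t$ is a limit of elements of the closed subspace $\p'$, so $Dg\in\p'$. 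This proves $\D(\p)\subseteq\D(\p')$. The stated consequence is then immediate: the inclusion $\D(\p)\subseteq\D(\p')$ makes the natural surjection $\D/\D(\p)\to\D/\D(\p')$ well defined, and by Proposition~\ref{tangentejets} these are $T_\p M$ and $T_{\p'}M$, so this surjection is the asserted tangent map $T_\p M\to T_{\p'}M$ attached to the inclusion $\p\subseteq\p'$.

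I expect the main obstacle to be the content of the third paragraph: that the flow of a field tangent to $\p$ carries the family of submanifolds through $\p$ into itself and thereby normalizes the Cartan system $\C_\p$. Infinitesimally this is the statement that the value of $[D,E]$ at $\p$ lies in $\C_\p$ whenever $D\in\D(\p)$ and $E$ is tangent to a submanifold through $\p$, which one could also extract directly from the bracket identity $D(Ef)=E(Df)+[D,E]f$ (the term $E(Df)$ being a generator of $\p'$ since $Df\in\p$). In either formulation the delicate point is ensuring that the deformed submanifolds still pass through $\p$, which is exactly what $\phi_t^*\p=\p$ guarantees; the characterization of $\C_\p$ as the annihilator of $\Omega_\p$ from Proposition~\ref{contactoaniquilado} does \emph{not} shortcut this, since using it would make the argument circular.
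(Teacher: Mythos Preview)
Your proof is correct and takes a genuinely different route from the paper's. The paper works entirely in the local coordinates set up for Proposition~\ref{contactoaniquilado}: writing $\p=(y^j)+\m_p^{\ell+1}+(Q^h(x))$ and $\p'=(y^j)+\m_p^\ell+(Q^h)+(\partial Q^h/\partial x^\alpha)$, it verifies directly that a field $D=a^\alpha\partial/\partial x^\alpha+b^j\partial/\partial y^j$ with $D\p\subseteq\p$ sends each new generator $\partial Q^h/\partial x^\beta$ into $\p'$, by rewriting $D(\partial Q^h/\partial x^\beta)$ as $\partial_{x^\beta}\!\bigl(a^\alpha\,\partial Q^h/\partial x^\alpha\bigr)$ minus a term already in $\p'$ and using that $a^\alpha\,\partial Q^h/\partial x^\alpha\in\p\cap\R[x]\subseteq\m_p^{\ell+1}+(Q^k)$. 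Your argument is coordinate-free: the flow of $D$ preserves $\p$, hence permutes the $m$-dimensional submanifolds through $\p$, hence preserves the generating set of $\p'$; differentiating at $t=0$ gives $D\p'\subseteq\p'$. This is more conceptual and makes the geometric mechanism visible (fields in $\D(\p)$ act on the family of submanifolds through $\p$), while the paper's computation is shorter given the explicit form of $\p'$ already in hand. Two small points: what you actually need for the closedness step is only $\p'\supseteq\p\supseteq\m_p^{\ell+1}$, not the stronger $\p'\supseteq\m_p^\ell$ which you assert but do not prove; and the flow-domain technicality is cleanest if you first observe that $D\in\D(\p)$ forces $D(p)=0$ (since the induced derivation of the Weil algebra $\A/\p$ must preserve its maximal ideal), so that $\phi_t(p)=p$ from the outset rather than as a consequence of $\phi_t^*\p=\p$. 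Your final remark about circularity is accurate: using Proposition~\ref{contactoaniquilado} to place $[D,E]_\p$ in $\C_\p$ would require $D(Ef)\in\p'$, which is exactly the goal.
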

\begin{proof}
Let us take local coordinates for $M$ around $p$ like in the previous proof.

A field $D=a^\alpha\partial/\partial x^\alpha+b^j\partial/\partial y^j$ is tangent to $\p$ if and only if $b^j\in\p$, $a^\alpha\in\m_p$ and $a^\alpha\partial Q^h/\partial x^\alpha\in\p$. Subtracting terms in $\p$, we can assume that the $a^\alpha$ are polynomials in the $x$'s.

In order to see that $D$ maps $\p'$ into $\p'$ we have to check that $D\left(\partial Q^h/\partial x^\alpha\right)\in\p'$:
$$D\,\frac{\partial Q^h}{\partial x^\beta}=
  a^\alpha\frac{\partial }{\partial x^\alpha}\frac{\partial Q^h}{\partial x^\beta}=
   \frac{\partial}{\partial x^\beta}\left(a^\alpha\frac{\partial Q^h}{\partial x^\alpha}\right)-
     \frac{\partial a^\alpha}{\partial x^\beta}\frac{\partial Q^h}{\partial x^\alpha}.$$

Since $a^\alpha{\partial Q^h}/{\partial x^\alpha}\in\p\cap\R[x^1,\dots,x^m]\subseteq\m_p^{\ell+1}+(Q^k),$
it follows that $D\left(\partial Q^h/\partial x^\alpha\right)\in\p'$.
\end{proof}

\begin{prop}
 If the width of $\p'$ equals that of $\p$, the map $T_\p M\to T_{\p'}M$ sends $\C_\p$  into  $\C_{\p'}$.
\end{prop}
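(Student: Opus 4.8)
The plan is to verify the statement on generators of $\C_\p$ and then to see that the hypothesis on widths is precisely what keeps those generators inside $\C_{\p'}$. First I would make the tangent map completely explicit. By Proposition \ref{derivadappprima} we have $\D(\p)\subseteq\D(\p')$, so the map $T_\p M\to T_{\p'}M$ is nothing but the canonical surjection $\D/\D(\p)\to\D/\D(\p')$; in particular it carries the value $\DD_\p$ of a field $D$ to the value $\DD_{\p'}$ of the \emph{same} field $D$. Since this map is $\A$-linear and $\C_{\p'}$ is a submodule of $T_{\p'}M$ (Proposition \ref{contactoaniquilado}), it is enough to control the images of a generating family of $\C_\p$.

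By Definition \ref{defcontacto}, $\C_\p$ is generated by the values $\DD_\p$ of fields $D\in\D$ tangent to some $m$-dimensional submanifold $X$ passing through $\p$, where $m$ is the width of $\p$. I would fix such a pair $(D,X)$, so that $I(X)\subseteq\p$, $D(I(X))\subseteq I(X)$ and $\dim X=m$. The inclusion $\p\subseteq\p'$ then gives $I(X)\subseteq\p\subseteq\p'$, so the very same $X$ passes through $\p'$ as well; moreover the tangency $D(I(X))\subseteq I(X)$ is a condition on $D$ and $X$ alone, hence unaffected by replacing $\p$ with $\p'$.

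Here the hypothesis enters decisively. Since the width of $\p'$ is assumed equal to that of $\p$, namely $m$, the submanifold $X$ has \emph{exactly} the dimension demanded of a generating submanifold for $\C_{\p'}$. Consequently $D$ is tangent to an $m$-dimensional submanifold $X$ through $\p'$, and so its value $\DD_{\p'}$ lies in $\C_{\p'}$ by the definition of the Cartan system at $\p'$. As the tangent map sends each generator $\DD_\p$ of $\C_\p$ to $\DD_{\p'}\in\C_{\p'}$, and $\C_{\p'}$ is a submodule, it maps $\C_\p$ into $\C_{\p'}$.

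I do not anticipate a genuine obstacle; the only point deserving care is the role of the width hypothesis, which is where the whole argument turns. Along the inclusion $\p\subseteq\p'$ the width can only drop, because $\m_p/(\m_p^2+\p')$ is a quotient of $\m_p/(\m_p^2+\p)$; when it drops strictly, an $m$-dimensional $X$ through $\p$ has dimension larger than the width of $\p'$ and need not be one of the submanifolds that generate $\C_{\p'}$. Thus equality of widths is exactly the condition under which the generators of $\C_\p$ remain admissible generators at $\p'$, and one should not expect the conclusion to survive without it.
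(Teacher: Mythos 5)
Your proposal is correct and is essentially the paper's own argument: the proof in the paper is the single observation that every $m$-dimensional manifold passing through $\p$ also passes through $\p'$, which is exactly the key step you isolate and then elaborate. Your additional remarks (the explicit description of the tangent map via $\D(\p)\subseteq\D(\p')$ and the observation that the width can only drop along $\p\subseteq\p'$) are accurate but not needed beyond what the paper records.
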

\begin{proof}
 Every $m$-dimensional  manifold passing through $\p$, also passe through $\p'$.
 \end{proof}

 \begin{prop}\label{proyeccioncontacto1}
 Let $\p\in\M$ with width $m$. For each $m$-dimensional manifold $X$ which passes through $\p$ it holds
 $$T_{\p'}X=\pi_*\C_\p,$$
 where $\pi_*\colon T_\p M\To T_{\p'}M$ is the canonical projection.
 \end{prop}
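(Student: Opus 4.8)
The plan is to compute both members in the adapted local coordinates used in the proofs of Propositions \ref{contactoaniquilado} and \ref{derivadappprima}, and then to check that their systems of generators coincide inside $T_{\p'}M$. First I would fix coordinates $\{x^1,\dots,x^m,y^1,\dots,y^r\}$ around $p$ so that
$$\p=(y^1,\dots,y^r)+\m_p^{\ell+1}+(Q^h(x)),\qquad \p'=(y^1,\dots,y^r)+\m_p^\ell+(Q^h)+\Big(\frac{\partial Q^h}{\partial x^\alpha}\Big),$$
exactly as there. Any $m$-dimensional manifold $X$ through $\p$ has local equations $y^i=g^i(x)$ with $g^i=f^i+F^i$, $f^i\in(Q^h)$ and $F^i\in\m_p^{\ell+1}$, and its tangent fields are generated over its ring of functions by
$$e_\alpha=\frac{\partial}{\partial x^\alpha}+\frac{\partial g^i}{\partial x^\alpha}\frac{\partial}{\partial y^i},\qquad \alpha=1,\dots,m.$$
Since $I(X)\subseteq\p\subseteq\p'$, the jet $\p'$ descends to a jet of $X$ and $\mathcal{C}^\infty(X)/(\p'/I(X))=\A/\p'$; because $X$ is a submanifold the tangent map exists, and by Proposition \ref{tangentejets} the image of $T_{\p'}X$ in $T_{\p'}M$ is the $\A/\p'$-module generated by the values $(e_\alpha)_{\p'}$.

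Next I would describe $\pi_*\C_\p$. By the proof of Proposition \ref{contactoaniquilado}, $\C_\p$ is generated over $\A/\p$ by the values at $\p$ of the fields $e'_\alpha$ attached in the same way to every admissible manifold $X'$ (with equations $y^i={g'}^i(x)$, ${g'}^i\in(Q^h)+\m_p^{\ell+1}$); here one uses that a field with coefficients in $I(X')\subseteq\p$ has zero value at $\p$, so only the $e'_\alpha$ survive. As $\pi_*$ is $\A$-linear, $\pi_*\C_\p$ is therefore the $\A/\p'$-module generated by all of the $(e'_\alpha)_{\p'}$.

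The key step, and the place where the definition of the derived ideal is used decisively, is to show that this larger family of generators collapses: for any two admissible manifolds $X,X'$ one has $(e'_\alpha-e_\alpha)_{\p'}=0$ in $T_{\p'}M$. Indeed
$$e'_\alpha-e_\alpha=\frac{\partial({g'}^i-g^i)}{\partial x^\alpha}\frac{\partial}{\partial y^i},$$
and the coefficient lies in $\p'$: writing ${g'}^i-g^i=\sum_h c^h Q^h+({F'}^i-F^i)$ with ${F'}^i-F^i\in\m_p^{\ell+1}$, differentiation in $x^\alpha$ produces a combination of the $Q^h$, of the $\partial Q^h/\partial x^\alpha$, and of an element of $\m_p^\ell$, each of which belongs to $\p'$ by the displayed expression for $\p'$. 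A field whose coefficients lie in $\p'$ has zero value at $\p'$, hence $(e'_\alpha)_{\p'}=(e_\alpha)_{\p'}$ for every $X'$. Consequently $\pi_*\C_\p$ is generated over $\A/\p'$ by the $(e_\alpha)_{\p'}$ alone, which is precisely $T_{\p'}X$, and the equality follows.

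I expect the main obstacle to be purely organizational rather than conceptual: one must be careful that the tangent map $T_{\p'}X\to T_{\p'}M$ exists and that its image is generated by the $(e_\alpha)_{\p'}$ (this rests on the identification $\mathcal{C}^\infty(X)/(\p'/I(X))=\A/\p'$ together with Proposition \ref{tangentejets}), and that $\pi_*$ indeed carries the generators of $\C_\p$ to the $(e'_\alpha)_{\p'}$. Once both families of generators are written out, the decisive cancellation is forced by the very definition $\p'=\p+(D_\p\p\mid \DD_\p\in\C_\p)$, which was arranged exactly so that $\partial Q^h/\partial x^\alpha\in\p'$; no hypothesis on the width of $\p'$ is needed.
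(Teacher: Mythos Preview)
Your argument is correct, but it takes a more computational route than the paper's. You fix adapted coordinates, write explicit generators $(e_\alpha)_{\p'}$ for $T_{\p'}X$ and $(e'_\alpha)_{\p'}$ (ranging over all admissible $X'$) for $\pi_*\C_\p$, and then verify the cancellation $(e'_\alpha-e_\alpha)_{\p'}=0$ by checking that each coefficient $\partial({g'}^i-g^i)/\partial x^\alpha$ lies in $\p'$. This works, and your remark that a field with all coefficients in $\p'$ lies in $\p'\cdot\D\subseteq\D(\p')$ is exactly what is needed to conclude.

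The paper, by contrast, argues both inclusions without coordinates. For $T_{\p'}X\subseteq\pi_*\C_\p$ it uses $T_\p X\subseteq\C_\p$ (immediate from the definition of $\C_\p$) together with $\pi_*T_\p X=T_{\p'}X$, which follows from Proposition~\ref{derivadappprima}. For the reverse inclusion it observes that if $\DD_\p\in\C_\p$ then, \emph{by the very definition of the derived ideal} $\p'=\p+(D_\p\p\mid\DD_\p\in\C_\p)$, one has $D\p\subseteq\p'$ for any representative $D$; since $I(X)\subseteq\p$, this gives $DI(X)\subseteq\p'$, so $\DD_{\p'}$ is tangent to $X$ at $\p'$ and therefore lies in $T_{\p'}X$. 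No explicit knowledge of $\p'$ in coordinates is used, and Proposition~\ref{contactoaniquilado} is not invoked.

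What each buys: the paper's argument is shorter, coordinate-free, and makes transparent that the inclusion $\pi_*\C_\p\subseteq T_{\p'}X$ is a tautology encoded in the definition of $\p'$ together with $I(X)\subseteq\p$. Your approach, on the other hand, produces the explicit common basis $(e_\alpha)_{\p'}$ for both modules, which is useful information if one wants to compute ranks or write the contact system concretely; the price is the coordinate setup and the termwise verification that $\partial_\alpha$ applied to $(Q^h)+\m_p^{\ell+1}$ lands in $\p'$. One small point: the fact that $\C_\p$ is generated by the $(e'_\alpha)_\p$ follows directly from the definition of $\C_\p$ (any field tangent to $X'$ differs from an $\A$-combination of the $e'_\alpha$ by something in $I(X')\cdot\D\subseteq\p\cdot\D$), so you need not appeal to the proof of Proposition~\ref{contactoaniquilado} there.
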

 \begin{proof}
 The tangent map $T_\p X \to T_\p M$ (\ref{tangenteinducida}) identifies $T_\p X$ with the submodule of $T_\p M$ comprised by the values at $\p$ of the tangent fields on $M$ which are tangent to $X$. By definition of the Cartan system we get, thus, $T_\p X\subseteq\C_\p$, so that $\pi_*T_\p X\subseteq\pi_*\C_\p$; from Proposition \ref{derivadappprima} and the inclusion $\p\subseteq\p'$, it is derived that $\pi_*T_\p X=T_{\p'}X$. Therefore,  $T_{\p'}X\subseteq\pi_*\C_\p$. On the other hand, if $\DD_\p\in\C_\p$, for any field $D$ representative of $\DD_\p$ it holds $D\p\subseteq\p'$, so that $DI(X)\subseteq\p'$, which means that $\DD_{\p'}$ is tangent to $X$ at $\p'$.
 \end{proof}

 \begin{teo}\label{proyeccioncontacto}
  Let $\p\in\M$ of width $m$. Every $m$-dimensional manifold $X$ passing through $\p$ have at $\p'$ the same tangent space, namely, $\pi_*\C_\p$. In addition, $\C_\p=\pi_*^{-1}(T_{\p'}X)$.
  \end{teo}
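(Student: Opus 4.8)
The plan is to observe that the first assertion is already contained in Proposition \ref{proyeccioncontacto1}: that proposition gives $T_{\p'}X=\pi_*\C_\p$ for \emph{every} $m$-dimensional manifold $X$ passing through $\p$, and since the right-hand side $\pi_*\C_\p$ does not depend on $X$, all such manifolds share one and the same tangent space at $\p'$, namely $\pi_*\C_\p$. Thus the entire remaining content of the theorem reduces to the identity $\C_\p=\pi_*^{-1}(\pi_*\C_\p)$, after which substituting $\pi_*\C_\p=T_{\p'}X$ gives $\C_\p=\pi_*^{-1}(T_{\p'}X)$.

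One inclusion, $\C_\p\subseteq\pi_*^{-1}(\pi_*\C_\p)$, is automatic. Since these are submodules (subgroups) and $\pi_*$ is a homomorphism, one has $\pi_*^{-1}(\pi_*\C_\p)=\C_\p+\ker\pi_*$, so the reverse inclusion amounts to proving $\ker\pi_*\subseteq\C_\p$. By Proposition \ref{tangentejets} I may write $T_\p M=\D/\D(\p)$ and $T_{\p'}M=\D/\D(\p')$, and by Proposition \ref{derivadappprima} (which furnishes $\D(\p)\subseteq\D(\p')$) the map $\pi_*$ is precisely the canonical projection between these two quotients; hence $\ker\pi_*$ is the image in $T_\p M$ of the submodule $\D(\p')$.

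First I would record, using $\p\subseteq\p'$, that every field $D$ tangent to $\p'$ satisfies $D\p\subseteq D\p'\subseteq\p'$, so that $\D(\p')\subseteq\{D\in\D\,|\,D\p\subseteq\p'\}$. Now I recall from the proof of Proposition \ref{contactoaniquilado} that a tangent vector $\DD_\p$ is annihilated by $\Omega_\p$ exactly when a representative $D$ sends $\p$ into $\p'$; equivalently, the annihilator of $\Omega_\p$ is the image in $T_\p M$ of $\{D\in\D\,|\,D\p\subseteq\p'\}$, and by Proposition \ref{contactoaniquilado} this annihilator equals $\C_\p$. Passing to images in $T_\p M$ in the inclusion above then yields $\ker\pi_*\subseteq\C_\p$, as required.

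Combining the pieces gives $\pi_*^{-1}(\pi_*\C_\p)=\C_\p+\ker\pi_*=\C_\p$, and replacing $\pi_*\C_\p$ by $T_{\p'}X$ (Proposition \ref{proyeccioncontacto1}) delivers $\C_\p=\pi_*^{-1}(T_{\p'}X)$. The only step that deserves care is the identification of $\ker\pi_*$ with the image of $\D(\p')$ and, via Proposition \ref{contactoaniquilado}, with a submodule of $\C_\p$; the rest is formal manipulation of quotient modules. I expect no genuine obstacle beyond keeping the two derivation modules $\D(\p)$ and $\D(\p')$, together with their images in $T_\p M$, carefully distinguished throughout.
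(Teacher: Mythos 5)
Your proof is correct and follows essentially the same route as the paper: the first assertion is read off from Proposition \ref{proyeccioncontacto1}, and the second is reduced to showing $\ker\pi_*\subseteq\C_\p$, which you establish exactly as the paper does, by identifying $\ker\pi_*$ with the image of $\D(\p')$ in $T_\p M$, noting that such fields send $\p$ into $\p'$, and invoking Proposition \ref{contactoaniquilado}. Your extra bookkeeping (the identity $\pi_*^{-1}(\pi_*\C_\p)=\C_\p+\ker\pi_*$) merely makes explicit a reduction the paper states in one line.
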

 \begin{proof}
 The first point is \ref{proyeccioncontacto1}. It remains to show that the kernel of $\pi_*$ is contained in $\C_\p$: such a kernel is the set of values at $\p$ of the fields which map $\p'$ into $\p'$; these fields send $\p$ a $\p'$, so that their classes in $T_\p M$ are annihilated by $\Omega_\p$, thus \ref{contactoaniquilado} implies they belong to $\C_\p$.
 \end{proof}

 \begin{teo}\label{teoTaylor}
 Let $\p\in\M$ of width $m$. If it holds $\widehat{\p'}\subseteq\p$, a manifold $X$ of dimension $m$ that passes through $\p'$, but not through $\p$, has at $\p'$ a tangent space different of $\pi_*\C_\p$. In consequence, if $\p,\q\in\M$ are different, with width $m$ and hold $\widehat{\p'}\subseteq\p$, $\widehat{\q'}\subseteq\q$, the spaces $\pi_*\C_\p$ and $\pi_*\C_\q$ are different.
 \end{teo}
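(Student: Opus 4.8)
The plan is to prove the first assertion by contraposition and then derive the separation statement from it. So suppose $X$ is an $m$-dimensional manifold passing through $\p'$ with $T_{\p'}X=\pi_*\C_\p$, and let me show that $X$ then passes through $\p$ as well. First I would fix local coordinates $(x^1,\dots,x^m,y^1,\dots,y^r)$ adapted to $\p$ as in the proof of Proposition \ref{contactoaniquilado}, so that
$$\p=(y^1,\dots,y^r)+\m_p^{\ell+1}+(Q^h),\qquad \p'=(y^1,\dots,y^r)+\m_p^{\ell}+(Q^h)+\left(\frac{\partial Q^h}{\partial x^\alpha}\right).$$
Projecting the equality $T_{\p'}X=\pi_*\C_\p$ along the canonical map $T_{\p'}M\to T_pM$ sends $T_{\p'}X$ onto $T_pX$ and $\pi_*\C_\p$ onto $L_\p=\langle\partial/\partial x^1,\dots,\partial/\partial x^m\rangle$; hence $T_pX=L_\p$ and $X$ is a graph $y^i=g^i(x)$. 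Since $y^i-g^i\in I(X)\subseteq\p'$ and $y^i\in\p'$, each $g^i$ is a function of the $x$'s lying in $\p'$.

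The crux is a differential computation at $\p'$. For $f\in\p\subseteq\p'$ and any $\DD_\p\in\C_\p$ with representative $D$ one has $d_{\p'}f(\pi_*\DD_\p)=[Df]_{\p'}=0$, because $D\p\subseteq\p'$ by Proposition \ref{contactoaniquilado}; thus every $d_{\p'}f$ with $f\in\p$ kills $\pi_*\C_\p$. On the other hand $d_{\p'}(y^i-g^i)$ kills $T_{\p'}X=\pi_*\C_\p$ since $y^i-g^i\in I(X)$, and $d_{\p'}y^i$ kills $\pi_*\C_\p$ because $y^i\in\p$; subtracting, $d_{\p'}g^i$ kills $\pi_*\C_\p$. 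Pairing it against the classes of the fields $\partial/\partial x^\alpha$, which are tangent to $\{y=0\}$ and so represent elements of $\C_\p$, yields $\partial g^i/\partial x^\alpha\in\p'$ for every $\alpha$. As $g^i$ depends only on $x$, this forces $Dg^i\in\p'$ for every $D\in\D$, that is $g^i\in\widehat{\p'}$. Here the hypothesis enters decisively: $\widehat{\p'}\subseteq\p$ gives $g^i\in\p$, whence $y^i-g^i\in\p$ and $I(X)\subseteq\p$, so $X$ passes through $\p$ — contradiction. I expect this promotion of a mere annihilation condition to genuine membership in $\widehat{\p'}$ to be the main obstacle, and it is precisely the hypothesis $\widehat{\p'}\subseteq\p$ that bridges it.

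For the separation statement, if $\p'\neq\q'$ the spaces $\pi_*\C_\p$ and $\pi_*\C_\q$ are submodules of the distinct modules $T_{\p'}M$ and $T_{\q'}M$ and there is nothing to prove; so assume $\p'=\q'=:\mathfrak r$ and, for contradiction, $\pi_*\C_\p=\pi_*\C_\q$. By Theorem \ref{proyeccioncontacto} every $m$-dimensional $X$ through $\p$ has $T_{\mathfrak r}X=\pi_*\C_\p=\pi_*\C_\q$, so the first part, applied to $\q$, forces $X$ through $\q$; symmetrically every $m$-manifold through $\q$ passes through $\p$. To finish I would realize each generator of $\p$ inside the ideal of a suitable $m$-manifold through $\p$ — the manifold $\{y=0\}$ for the $y^i$, the graph $y^1=Q^h(x)$ for each $Q^h$, and the graphs $y^1=x^\beta$ with $|\beta|=\ell+1$ for the generators of $\m_p^{\ell+1}$ — so that $I(X)\subseteq\q$ for all these $X$ yields $\p\subseteq\q$. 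By symmetry $\q\subseteq\p$, hence $\p=\q$, contradicting $\p\neq\q$.
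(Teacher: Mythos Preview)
Your proof is correct and follows essentially the same route as the paper's. For the first assertion the paper argues directly (exhibiting some $[\partial/\partial x^\alpha]_{\p'}\in\pi_*\C_\p\setminus T_{\p'}X$) while you argue by contraposition, but the core computation is identical: from $[\partial/\partial x^\alpha]_{\p'}\in T_{\p'}X$ one gets $\partial g^i/\partial x^\alpha\in\p'$ for all $\alpha$, hence $g^i\in\widehat{\p'}\subseteq\p$. For the second assertion the paper simply invokes the fact that a width-$m$ jet $\p$ equals the sum of the ideals $I(X)$ over all $m$-dimensional $X$ through $\p$; your explicit list of manifolds ($\{y=0\}$, the graphs $y^1=Q^h$, and $y^1=x^\beta$ with $|\beta|=\ell+1$) is exactly a concrete verification of that fact, so the two arguments coincide.
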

 \begin{proof}
 Again, let us take local coordinates around $p$ in $M$ in such a way that $$\p=(y^1,\dots,y^r)+\m_p^{\ell+1}+\left(Q^h(x)\right),$$ and let $X$ be an $m$-dimensional manifold passing through $\p'$, but not through $\p$. The local equations of $X$ take the form $y^j=f^j(x^1,\dots,x^m)$ ($j=1,\dots,r$), where at least one of the $f^j$'s is out of $\p$. The manifold $Y$ given by equations $y^1=0,\dots,y^r=0$ passes through $\p$, and the field $\partial/\partial x^\alpha$ is tangent to $Y$, from which it follows that $[\partial/\partial x^\alpha]_{\p'}$ is into $T_{\p'}Y=\pi_*\C_\p$ (by \ref{proyeccioncontacto1}). If it holds
 $[\partial/\partial x^\alpha]_{\p'}\in T_{\p'}X$ it follows  $\left(\partial/\partial x^\alpha\right)I(X)\subseteq\p'$, and then $\left(\partial f^j/\partial x^\alpha\right)\in\p'$. If this is true for all $\alpha=1,\dots,m$, then it would be $f^j\in\widehat{\p'}\subseteq\p$, against the hypothesis. In this way, $T_{\p'}X\ne\pi_*\C_\p$.

 The second point in the theorem is a consequence of the first one and the fact of each jet $\p$ of width $m$, as an ideal of $\A$, is the sum of all the ideals of $m$-dimensional submanifolds passing through $\p$.
   \end{proof}

 \begin{defi}
 The map $\M\To\textrm{Grass}(\T M)$ sending $\p$ to $\pi_*\C_\p$ will be called \emph{Taylor map}. Its restriction to the set of jets which hold $\widehat{\p'}\subseteq\p$ will be named \emph{Taylor injection}.
 \end{defi}

\begin{obss}
Classical jets, $\p_m^\ell$, hold the condition $\widehat{\p'}\subseteq\p$. In the classical case, the Taylor injection assigns to each system of partial differential equations of order $\ell$ with $m$ variables and $n-m$ unknowns (= submanifold of $\M_m^\ell$, when we will dispose of differentiable structures) a system of partial differential equations of first order with $m$ independent variables and $(n-m){{m+\ell-1}\choose{m}}$ unknowns  (system of equations in $\M_m^{\ell-1}$, of first order, with  $m$ independent variables: submanifold of $\textrm{Grass}_{\, m}(T\M_m^{\ell-1})$).
\end{obss}
\bigskip

\section{$A$-points manifold}\label{Apuntos}

\begin{Terminologia y notaciones}\label{Apuntosnotaciones}
let $A$ be a Weil algebra, $M$ a manifold. An \emph{$A$-point of $M$} is a morphism of $\R$-algebras $p^A\colon\A\to~A$; the image of $f$ by $p^A$ will be denoted by $p^A(f)$ or $f(p^A)$. The set of all of the $A$-points of $M$ will be denoted $M^A$; the subset of the \emph{regular or proper $A$-points} (exhaustive morphisms), $\check M^A$. For cases  $A=\R_m^\ell$ we will put $M_m^\ell$ instead of $M^{\R_m^\ell}$. The composition of morphism $p^A$ with the taken of quotient $A\to~A/\m_A=\R$ is a point $p\in M$; following Weil, we will say that $p^A$ is an \emph{$A$-point near to $p$}.

Each $f\in\A$ determines the function (same notation) $f\colon M^A\to A$, by: $f(p^A)=p^A(f)$. If $\{a^\alpha\}$ is a basis of $A$ as a $\R$-vector space, each $f$ determines real functions $f_\alpha\colon M^A\to\R$ by the rule $f(p^A)=f_\alpha(p^A)a^\alpha$. Such $f_\alpha$'s will be called \emph{real components of $f$} with respect to the basis $\{a^\alpha\}$.
\end{Terminologia y notaciones}
\begin{teo}\label{diferenciablesApuntos}
$M^A$ admits  an structure of differentiable manifold by requiring the following condition: for each $f\in \A$, the real components $f_\alpha\colon M^A\to\R$ are in the class $\C^\infty$. For that structure, $\check M^A$ is either the empty set or a dense open set of $M^A$.
\end{teo}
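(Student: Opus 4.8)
The plan is to construct the manifold structure on $M^A$ chart by chart out of charts of $M$, using the Weil parameterization of $A$-points, and then to read off both the smoothness of the $f_\alpha$ and the behaviour of $\check M^A$ from this explicit local model. First I would fix a chart $(U;x^1,\dots,x^n)$ of $M$, with $n=\dim M$, and describe all $A$-points near points of $U$. If $p^A$ is near $p$ then $p^A(\m_p)\subseteq\m_A$, so $p^A(\m_p^{\ell+1})\subseteq\m_A^{\ell+1}=(0)$ with $\ell$ the order of $A$; hence $p^A$ factors through $\A/\m_p^{\ell+1}\simeq\R_n^\ell$ and is determined by the nilpotent data $\nu^i:=p^A(x^i)-x^i(p)\in\m_A$. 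Conversely, since any product of $\ell+1$ elements of $\m_A$ is zero, an arbitrary choice of $p\in U$ and $\nu^1,\dots,\nu^n\in\m_A$ extends, through the truncated Taylor expansion $p^A(f)=\sum_{|\beta|\le\ell}\frac1{\beta!}(\partial^\beta f)(p)\,\nu^\beta$, to a well-defined $\R$-algebra morphism. This yields a bijection between the set $U^A$ of $A$-points near $U$ and the open subset of $A^n$ consisting of tuples whose reductions modulo $\m_A$ lie in $U$; equivalently, an open subset of $\R^{n\dim A}$ with coordinates the real components $(x^i)_\alpha$.

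Next I would verify the two compatibilities. For the stated condition, the Taylor formula exhibits $p^A(f)$ as a polynomial in the components of the $\nu^i$ with coefficients $(\partial^\beta f)(p)$ depending smoothly on $p$; hence every real component $f_\alpha$ is a $\C^\infty$ function of the coordinates $(x^i)_\alpha$. For the transitions, if $(V;y^1,\dots,y^n)$ is a second chart with $y=\psi(x)$ on $U\cap V$, then $p^A(y^k)=\sum_{|\beta|\le\ell}\frac1{\beta!}(\partial^\beta\psi^k)(p)\,\nu^\beta$ is likewise smooth in the old coordinates, and symmetrically for the inverse, so the change of charts is a diffeomorphism. The base-point map $M^A\to M$ makes the total space Hausdorff and second countable, so these charts form a smooth atlas of dimension $n\dim A$; and it is the unique structure making all $f_\alpha$ smooth, since the $(x^i)_\alpha$ are themselves among the $f_\alpha$ and already serve as coordinates.

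Finally I would settle $\check M^A$ within the same model. By the Taylor formula the image of $p^A$ is the $\R$-subalgebra generated by $\nu^1,\dots,\nu^n$; as these lie in $\m_A$, the morphism is exhaustive if and only if the $\nu^i$ generate $\m_A$ as an ideal, which by Nakayama is equivalent to their classes spanning $\m_A/\m_A^2$. Setting $m=\dim\m_A/\m_A^2$ (the width of $A$), this says that the $n\times m$ matrix formed by the components of the $\nu^i$ modulo $\m_A^2$ has rank $m$ — an open condition on $U^A$ not involving the base point. If $n<m$ it never holds, so $\check M^A\cap U^A=\emptyset$; if $n\ge m$ its failure locus is cut out by the vanishing of all maximal minors, a proper algebraic, hence nowhere dense, subset, so $\check M^A\cap U^A$ is open and dense in $U^A$. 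Since openness and density are local for the open covering $\{U^A\}$, it follows that $\check M^A$ is empty when $\dim M<\text{width}\,A$ and open and dense otherwise.

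I expect the main obstacle to be the coordinate-free verification of the local model, i.e. that the transition maps are smooth and invertible: this rests on the truncated-Taylor description of $p^A$ together with the chain rule computed inside the Weil algebra $A$, and it is precisely what welds the individual charts $U^A$ into a single manifold. Once that is in place, the characterization of $\check M^A$ is a short corollary of the Nakayama/rank computation above.
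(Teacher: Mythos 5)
Your proposal is correct and follows essentially the same route as the paper: both build the local model $U^A\simeq U\times(\m_A)^{\times n}$ from the truncated Taylor expansion of $f$ at $p$, read off the smoothness of the real components $f_\alpha$ as polynomials in the $x^i_\alpha$, glue over coordinate overlaps, and characterize regularity of $p^A$ by the condition that the values of the $x^i-x^i(p)$ generate $\m_A/\m_A^2$, an algebraic (rank) condition whose failure locus is empty or nowhere dense. Your explicit Nakayama/minor argument and the remark on transition maps only make more precise what the paper's proof asserts.
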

\begin{proof}
Let $U$ be an open set in $M$ coordinated by functions $x^1,\dots, x^n$, which we can assume in $\A$ by reducing $U$ if necessary. As a function $U^A\to A$ we have $x^i=x^i_\alpha a^\alpha$, once chosen the basis of $A$. For each $f\in\A$ and each $p\in U$, the Taylor expansion gives us:
$$f\equiv\sum_{|\lambda|\le \ell}\frac 1{\lambda!}\left(D^\lambda f\right)(p)\,(x-x(p))^\lambda\quad \textrm{mod}\,\m_p^{\ell+1},$$
where $\ell=$order of $A$, $\lambda=(\lambda_1,\dots,\lambda_n)$, $|\lambda|=\lambda_1+\cdots\lambda_n$, etc.

Taking values in $p^A$, near to $p$:
$$f(p^A)=\sum_{|\lambda|\le \ell}\frac 1{\lambda!}\left(D^\lambda f\right)(p)\,\left(\sum_\alpha x_\alpha(p^A)a^\alpha-x(p)\right)^\lambda=
  \sum_\beta f_\beta(p^A)a^\beta.$$
   Expanding the terms into brackets, applying the multiplication table of $A$ and, finally, equating coefficients of each $a^\beta$ in both members of equation, we get $f_\beta$ as a polynomial in the $x_\alpha^i$'s with coefficients in $\A$, which are expressible as linear combinations, with constant coefficients, in the derivatives of $f$ of orders $\le\ell$, these combinations being the same for all $f$.

   On the other hand, the morphism $\R[x^1,\dots,x^n]\to A$ can be defined by assigning arbitrary values in $A$ to each variable $x^i$. If these values are attributed in such a way that, when composed with the taking of quotient $A\to A/\m_A=\R$, $(x^1,\dots,x^n)$ goes to a point in $U$, we obtain a morphism $p^A\colon\C^\infty(U)\to A$ near to a point $p\in U$. Thus, can be seen that $U^A\simeq U\times (\m_A)^{\times n}$ as sets. This one-to-one correspondence allows us to transport till $U^A$ the differentiable structure of  $U\times (\m_A)^{\times n}$. The $x_\alpha^i$'s are coordinates and each $f\in\A$ is a differentiable map $U^A\to A$. The uniqueness  of the differentiable structure on $U^A$ is assured by that condition.

   Given two coordinates subsets $U$, $V$, the manifolds $U^A$, $V^A$, induce on $(U\cap V)^A$ the same differentiable structure, which gives for $M^A$ the existence and uniqueness of the differentiable structures as set out in the statement.

   With the previous notation, for $p^A\in U^A$ to be a regular $A$-point is necessary and sufficient that the values at $p^A$ of the functions $x^i-x^i(p)$ $(i=1,\dots,n)$ generate $\m_A/\m_A^2$. Therefore, the fact of $p^A$ being no regular is translated into a system of algebraic equations between the real components $x_\alpha^i$ of the $x^i$. From here, we derive that $\check M^A$ is either empty or dense.
\end{proof}

\begin{Functorialidad}

Each morphism of manifolds $\vphi\colon M\to N$ gives a map (same notation) $\vphi\colon M^A\to N^A$, by $\vphi(p^A)=p^A\circ\vphi^*$. Translated into real components, we see that $\vphi^*(f_\alpha)=(\vphi^* f)_\alpha$, from which it follows that $\vphi\colon M^A\to N^A$ is a differentiable map for the structures given in  \ref{Apuntosnotaciones}. It results that the assignation $M\rightsquigarrow M^A$ is a covariant functor from the category of manifolds to itself.

Given a morphism $\alpha\colon A\to B$ of Weil algebras, the canonical map $M^A\to M^B$ ($p^A\to p^B=\alpha\circ p^A$) is smooth, as it is seen  immediately by expressing  $\alpha$ in terms of basis of $A$, $B$ as $\R$-vector spaces. In this way, the assignation $A\rightsquigarrow M^A$ is a covariant functor from the category of Weil algebras to that of manifolds.
\end{Functorialidad}

\begin{Modulo tangente en un Apunto}

For each $p^A\in M^A$ and each field $D$ on $M$, we will call  \emph{value of $D$ at $p^A$} 
the map $D_{p^A}\colon\A\to A$, $D_{p^A}f:=(Df)(p^A)$. This map is a derivation from the ring of functions $\A$ with values in $A$, which is considered as an $\A$-module by means of the morphism $p^A$.

Each derivation from $\A$ to $A$ at the point $p^A$ comes in this fashion from a field $D$ on $M$; the proof is performed by means of an argument similar to the one we used in the proof of \ref{tangentejets}.

We will denote by $T_{p^A}M$ the $\A$-module of derivations from $\A$ to $A$, at the point $p^A$. We will call it \emph{tangent module to  $M$ at the point $p^A$}. It is an $A$-module locally free of rank $=\dim M$, and basis $(\partial/\partial x^1)_{p^A}$,$\dots$, $(\partial/\partial x^n)_{p^A}$, in local coordinates $x^1,\dots,x^n,$ for $M$.
\end{Modulo tangente en un Apunto}

\begin{teo}\label{tangenteApuntos}
Tangent module $T_{p^A}M$ and tangent space $T_{p^A}M^A$ to the manifold $M^A$ at the point $p^A$ are canonically isomorphic $\R$-vector spaces: to the derivation $D_{p^A}$ it corresponds the tangent vector $\overline D_{p^A}$ such that
$\overline D_{p^A}f_\alpha=(D_{p^A}f)_\alpha$, for each $f\in\A$ and each basis $\{a^\alpha\}$ chosen in $A$.
\end{teo}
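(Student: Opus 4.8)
The plan is to prove the correspondence is an isomorphism by constructing the \emph{inverse} map explicitly and checking it is a well-defined $\R$-linear isomorphism; the map written in the statement is then recovered as its inverse. Given a tangent vector $v\in T_{p^A}M^A$, that is, an $\R$-derivation $\C^\infty(M^A)\To\R$ supported at $p^A$, I would set
$$D^v\colon\A\To A,\qquad D^v f:=\sum_\alpha (v f_\alpha)\,a^\alpha,$$
where $f=\sum_\alpha f_\alpha a^\alpha$ is the decomposition into real components of \ref{Apuntosnotaciones}. This is visibly $\R$-linear in $f$, and it is independent of the chosen basis $\{a^\alpha\}$: a change of basis transforms the $f_\alpha$ by a constant matrix, and since $v$ is $\R$-linear the object $\sum_\alpha(vf_\alpha)a^\alpha$ is unchanged. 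So $v\mapsto D^v$ is a basis-free $\R$-linear map $T_{p^A}M^A\To T_{p^A}M$.

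The one computation with actual content is to check that $D^v$ is a derivation at $p^A$, i.e. lands in $T_{p^A}M$. Writing the multiplication table $a^\alpha a^\beta=\sum_\gamma c_{\alpha\beta}^\gamma a^\gamma$, the components of a product obey $(fg)_\gamma=\sum_{\alpha,\beta}c_{\alpha\beta}^\gamma f_\alpha g_\beta$. Applying $v$, using that $v$ is a point-derivation at $p^A$ so that $v(f_\alpha g_\beta)=f_\alpha(p^A)\,vg_\beta+g_\beta(p^A)\,vf_\alpha$, and then resumming $\sum_\gamma c_{\alpha\beta}^\gamma a^\gamma=a^\alpha a^\beta$, one gets exactly $D^v(fg)=f(p^A)\,D^v g+g(p^A)\,D^v f$. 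This is the Leibniz rule for a derivation with values in $A$ viewed as an $\A$-module through $p^A$, so indeed $D^v\in T_{p^A}M$. I expect this bookkeeping with the structure constants to be the only place requiring care.

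For bijectivity I would argue by dimension and injectivity. As recorded just above, $T_{p^A}M$ is a free $A$-module of rank $n=\dim M$, hence an $\R$-vector space of dimension $n\dim_\R A$; and from the identification $U^A\simeq U\times(\m_A)^{\times n}$ in the proof of \ref{diferenciablesApuntos}, the manifold $M^A$ has dimension $n(1+\dim_\R\m_A)=n\dim_\R A$. The two $\R$-vector spaces thus have equal finite dimension, so it suffices to prove $v\mapsto D^v$ injective. If $D^v=0$, then $vf_\alpha=0$ for every $f\in\A$ and every $\alpha$; taking $f$ to be the coordinate functions $x^i$ (which we arranged to lie in $\A$) gives $vx^i_\alpha=0$ for all $i,\alpha$. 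Since the $x^i_\alpha$ are precisely the local coordinates on $M^A$ built in \ref{diferenciablesApuntos}, a tangent vector killing all of them is zero, so $v=0$.

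Finally I would identify the inverse of $v\mapsto D^v$ with the map of the statement. The relation $D^{\overline D_{p^A}}=D_{p^A}$ unwinds to $\sum_\alpha(\overline D_{p^A}f_\alpha)\,a^\alpha=D_{p^A}f$, that is, $\overline D_{p^A}f_\alpha=(D_{p^A}f)_\alpha$ for every $f\in\A$ and every $\alpha$, which is exactly the defining property of $\overline D_{p^A}$. Hence the assignment $D_{p^A}\mapsto\overline D_{p^A}$ is the inverse of the isomorphism just constructed, and is therefore itself a canonical $\R$-linear isomorphism $T_{p^A}M\xrightarrow{\ \sim\ }T_{p^A}M^A$, independent of the auxiliary basis. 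Everything beyond the Leibniz computation is a dimension count together with the fact that the $x^i_\alpha$ form a coordinate system on $M^A$.
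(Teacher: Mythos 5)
Your proposal is correct and follows essentially the same route as the paper: both construct the map $T_{p^A}M^A\to T_{p^A}M$ via the structure constants $c^{\alpha\beta}_\gamma$, verify the Leibniz rule, and conclude by injectivity together with the equality of dimensions $n\cdot\dim_\R A$. You merely fill in details the paper leaves implicit (basis independence, the injectivity argument via the coordinates $x^i_\alpha$, and the identification with the map as stated), which is fine.
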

\begin{proof}
Chosen a basis of $A$ as a $\R$-vector space, multiplication table in $A$ is of the form $a^\alpha a^\beta=c_\gamma^{\alpha\beta}a^\gamma$, with the  $c$'s in $\R$. For each pair $f$, $g$, it follows that $(f\cdot g)_\gamma=c_\gamma^{\alpha\beta}f_\alpha g_\beta$. If $\overline D_{p^A}\in T_{p^A}M^A$, the rules of derivation for sums and products give that the map $D_{p^A}\colon\A\to A$ defined by $D_{p^A}f=(\overline D_{p^A}f_\alpha)a^\alpha$ is a derivation at $p^A$. The assignation $\overline D_{p^A}\mapsto D_{p^A}$ from $T_{p^A}M^A$ to $T_{p^A}M$ is an injective  morphism of vector spaces with the same dimension, $n\cdot\dim A$, and then, an isomorphism.
\end{proof}

\begin{cor}\label{independientesApuntos}
Let $\alpha\colon A\to B$ be an exhaustive  morphism of Weil algebras. For each manifold $M$, the corresponding morphism of tangent spaces
$T_{p^A}\check M^A\to T_{p^B}\check M^B$ ($p^B=\alpha\circ p^A$) is exhaustive. As a consequence, if the functions $F_1,\dots,F_k\in\C^\infty(\check M^B)$ are functionally independent, also so are their images $\alpha^*(F_1),\dots,\alpha^*(F_k)$ in $\C^\infty(\check M^A)$.
\end{cor}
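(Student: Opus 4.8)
The plan is to compute the tangent map explicitly through the isomorphism of Theorem \ref{tangenteApuntos}, and then to deduce the functional independence from the fact that a submersion is open. First I would check that $\alpha$ restricts to a map $\check M^A\to\check M^B$: if $p^A$ is exhaustive then so is $p^B=\alpha\circ p^A$, since $\alpha$ is exhaustive, so regular points go to regular points. As $\check M^A$ is open in $M^A$ and $\check M^B$ is open in $M^B$ (Theorem \ref{diferenciablesApuntos}), I may identify $T_{p^A}\check M^A=T_{p^A}M^A$ and $T_{p^B}\check M^B=T_{p^B}M^B$, and use Theorem \ref{tangenteApuntos} to replace these by the derivation modules $T_{p^A}M=\Der_{p^A}(\A,A)$ and $T_{p^B}M=\Der_{p^B}(\A,B)$. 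A short computation in real components (or the chain rule, using the $\R$-linearity of $\alpha$) shows that under these identifications the tangent map of $\alpha$ is precisely the natural map $D_{p^A}\mapsto\alpha\circ D_{p^A}$ introduced after Definition \ref{modulotangente}.

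To prove the first assertion I would expand a derivation in the basis $(\partial/\partial x^1)_{p^A},\dots,(\partial/\partial x^n)_{p^A}$ of the free $A$-module $T_{p^A}M$, writing $D_{p^A}=\sum_i c_i\,(\partial/\partial x^i)_{p^A}$ with $c_i\in A$. Using $\alpha\bigl(p^A(g)\bigr)=p^B(g)$ for $g\in\A$, one finds $\alpha\circ D_{p^A}=\sum_i\alpha(c_i)\,(\partial/\partial x^i)_{p^B}$; that is, in the corresponding free $B$-basis the map acts coordinatewise as $(c_i)\mapsto(\alpha(c_i))$. Since $\alpha$ is exhaustive, this coordinatewise map $A^n\to B^n$ is surjective, and hence $T_{p^A}\check M^A\to T_{p^B}\check M^B$ is exhaustive.

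For the consequence, observe that because $d\alpha_{p^A}$ is surjective at every regular point, $\alpha\colon\check M^A\to\check M^B$ is a submersion and therefore an open map. If $F_1,\dots,F_k$ are functionally independent, their differentials $dF_1,\dots,dF_k$ are linearly independent on a dense open set $V\subseteq\check M^B$; since $\alpha$ is open, the preimage $\alpha^{-1}(V)$ is dense in $\check M^A$. At any $p^A\in\alpha^{-1}(V)$ one has $d(\alpha^*F_j)_{p^A}=(dF_j)_{p^B}\circ d\alpha_{p^A}$, so a relation $\sum_j\lambda_j\,d(\alpha^*F_j)_{p^A}=0$ gives $\bigl(\sum_j\lambda_j(dF_j)_{p^B}\bigr)\circ d\alpha_{p^A}=0$; the surjectivity of $d\alpha_{p^A}$ then forces $\sum_j\lambda_j(dF_j)_{p^B}=0$, whence all $\lambda_j=0$. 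Thus the differentials of $\alpha^*F_1,\dots,\alpha^*F_k$ are independent on the dense set $\alpha^{-1}(V)$, which is exactly their functional independence. The step I expect to require the most care is pinning down the tangent map as the coordinatewise action of $\alpha$ (the identification with $D_{p^A}\mapsto\alpha\circ D_{p^A}$ followed by diagonalizing in the free-module bases); once that is settled, both the surjectivity and the independence follow at once.
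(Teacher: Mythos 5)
Your proof is correct and follows essentially the same route as the paper's: both identify the tangent map, via Theorem~\ref{tangenteApuntos}, with $D_{p^A}\mapsto\alpha\circ D_{p^A}$ and deduce surjectivity from that of $\alpha$ (the paper by lifting any derivation at $p^B$ to a tangent field $D$ on $M$ and noting $D_{p^B}=\alpha_*D_{p^A}$, you by expanding in the free $A$-module basis of partials so the map becomes coordinatewise $\alpha$ --- the same mechanism). Your explicit handling of the functional-independence consequence, via the openness of the submersion and density of $\alpha^{-1}(V)$, fills in a step the paper leaves implicit and is sound.
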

\begin{proof}
For each tangent field $D$ on $M$ it holds $D_{p^B}=\alpha_*D_{p^A}$, and we conclude from the above theorem.
\end{proof}

Theorem \ref{tangenteApuntos} is a particular case of the following one.
\begin{teo}[Weil \cite{Weil}]
For an arbitrary manifold $M$ and Weil algebras $A$, $B$, we have a canonical isomorphism of manifolds:
$$(M^A)^B\simeq M^{A\otimes B}.$$
\end{teo}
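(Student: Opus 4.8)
The plan is to exhibit an explicit, coordinate-free bijection between the underlying sets and then to check, in the charts furnished by Theorem \ref{diferenciablesApuntos}, that it and its inverse are smooth. The organizing device will be the \emph{universal $A$-point}. Fixing a basis $\{a^\alpha\}$ of $A$, the real components of Notation \ref{Apuntosnotaciones} assemble into the map
$$u_A\colon\A\To\C^\infty(M^A)\otimes_\R A,\qquad u_A(f)=\sum_\alpha f_\alpha\otimes a^\alpha,$$
and the multiplication-table identity $(fg)_\gamma=c_\gamma^{\alpha\beta}f_\alpha g_\beta$ used in the proof of \ref{tangenteApuntos} shows at once that $u_A$ is a morphism of $\R$-algebras. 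First I would record the universal property hidden in \ref{diferenciablesApuntos}: for every manifold $N$, composing with $u_A$ and evaluating pointwise identifies smooth maps $\psi\colon N\to M^A$ with $\R$-algebra morphisms $\A\to\B\otimes_\R A$, the inverse passing through the coordinate description $U^A\simeq U\times(\m_A)^{\times n}$; taking $N$ a point gives back the very definition of $M^A$.

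Next I would define the comparison map on points. A $B$-point of $M^A$ is a morphism $p^B\colon\C^\infty(M^A)\to B$; setting
$$\Theta(p^B):=(p^B\otimes\mathrm{id}_A)\circ u_A\colon\A\To B\otimes_\R A\simeq A\otimes_\R B,\qquad \Theta(p^B)(f)=\sum_\alpha a^\alpha\otimes p^B(f_\alpha),$$
produces a composite of $\R$-algebra morphisms, hence an $(A\otimes B)$-point of $M$, that is, an element of $M^{A\otimes B}$. Because this formula expresses the real components of $\Theta(p^B)$ as the images under $p^B$ of the coordinate functions $x^i_\alpha$ of $M^A$, the map $\Theta$ is smooth in the charts of \ref{diferenciablesApuntos}.

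The substantial point is the inverse $\Xi\colon M^{A\otimes B}\to(M^A)^B$. Given $\Psi\colon\A\to A\otimes_\R B$, I would work in a coordinate neighbourhood $U\subseteq M$ with coordinates $x^1,\dots,x^n$, so that by \ref{diferenciablesApuntos} the $x^i_\alpha$ are global coordinates on $U^A$. Writing $\Psi(x^i)=\sum_\alpha a^\alpha\otimes b^i_\alpha$ with $b^i_\alpha\in B$, I would \emph{prescribe} a $B$-point of $U^A$ by declaring $\Xi(\Psi)(x^i_\alpha):=b^i_\alpha$; that this defines a genuine morphism $\C^\infty(U^A)\to B$ is exactly the freedom, established in the proof of \ref{diferenciablesApuntos}, to assign arbitrary values of the appropriate residue to the coordinate functions when building a point with values in a Weil algebra. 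The residues $b^i_\alpha\bmod\m_B$ are the coordinates of the $A$-point obtained from $\Psi$ by $A\otimes B\to A$, so $\Xi(\Psi)$ sits over the correct base point, and independence of the chart follows because $\Psi$ is intrinsic to $\A$ while the transition maps on the $U^A$ are the smooth prolongations of those on $M$. Evaluating on coordinates then gives $\Theta(\Xi(\Psi))(x^i)=\sum_\alpha a^\alpha\otimes b^i_\alpha=\Psi(x^i)$ and, symmetrically, $\Xi(\Theta(p^B))(x^i_\alpha)=p^B(x^i_\alpha)$; since morphisms out of $\A$, resp.\ out of $\C^\infty(M^A)$, into a Weil algebra are determined by their values on a coordinate system (Taylor expansion, as in \ref{diferenciablesApuntos}), the two composites are the identity.

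The step I expect to be the main obstacle is precisely the well-definedness of $\Xi$: one must be sure that prescribing the values $b^i_\alpha$ on coordinates really yields a morphism of the \emph{full} ring $\C^\infty(U^A)$ into $B$, and that the resulting $B$-point is independent of the chosen coordinates. Both are settled by the coordinate analysis of \ref{diferenciablesApuntos} — the first by the Taylor-expansion argument that a Weil-algebra-valued point is freely determined by coordinate data of the right residue, the second by applying the functoriality $M\rightsquigarrow M^A$ to coordinate changes — so that $\Theta$ and $\Xi$ are mutually inverse diffeomorphisms and $(M^A)^B\simeq M^{A\otimes B}$ canonically.
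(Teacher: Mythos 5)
Your proof is correct and takes essentially the same route as the paper's: both construct the comparison map by composing the universal real-component morphism $\A\to\C^\infty(M^A)\otimes_\R A$, $f\mapsto f_\alpha\otimes a^\alpha$, with evaluation at a $B$-point of $M^A$, and both conclude from the identity $(f_\alpha)_\beta=f_{\alpha\beta}$ identifying the coordinate functions of $(M^A)^B$ and $M^{A\otimes B}$. The only difference is one of explicitness: you construct the inverse $\Xi$ by prescribing coordinate values, whereas the paper leaves bijectivity implicit in that coordinate identification.
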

\begin{proof}
Each $f\in\A$ defines the function $f\colon M^A\to A$, which determines a ring morphism
$$\A\To\C^\infty(M^A)\otimes_\R A,\quad f\mapsto f_\alpha\, a^\alpha,$$
with the notation we are using previously. We have consecutive morphism of rings
$$\A\To\C^\infty(M^A)\otimes A\To \C^\infty((M^A)^B)\otimes B\otimes A$$
which give, by composition, the morphism
\begin{equation*}
\xymatrix @R=1pt{
\A\ar[r]  &     \C^\infty((M^A)^B)\otimes B\otimes A \ar[r]     &      \C^\infty((M^A)^B)\otimes A\otimes B\\
f\ar@{|->}[r]    &     {f_\alpha\,a^\alpha\to(f_\alpha)_\beta\, b^\beta\otimes a^\alpha} \ar@{|->}[r]      &    (f_\alpha)_\beta\otimes a^\alpha\otimes b^\beta
}
\end{equation*}

 For each point $(p^A)^B\in(M^A)^B$, ``to take values at $(p^A)^B$'' is a morphism from $\C^\infty((M^A)^B)$ to $\R$, which gives, by composition with the previous one, a morphism $\A\to A\otimes B$, which is a $A\otimes B$-point $p^{A\otimes B}$ on $M$. By the above computation, for each $f\in\A$ is $(f_\alpha)_\beta((p^A)^B)=f_{\alpha\beta}(p^{A\otimes B})$ (taking $\{a^\alpha\otimes b^\beta\}$ as a basis in $A\otimes B$), and so the proof is concluded.
\end{proof}

\begin{El teorema es un caso particular del teorema de Weil}
When $B=\R_1^1=\R[x]/(x^2)$, a $B$-point is a map
$f\mapsto f(p_1^1)=f(p)+\epsilon\, D_pf$, where $f(p)$ and $D_pf$ are real numbers and  $\epsilon=[x]_{\textrm{mod}\,(x^2)}$. This map being morphism of  $\R$-algebras means that $p$ is a point in $M$ and $D_p$ is a vector tangent to $M$ at $p$. Thus, $M_1^1=TM$ is the tangent bundle of $M$.

In the same way, it can be seen that, if $A$ is an arbitrary Weil algebra, an $A\otimes\R_1^1$-point of $M$ is a pair $(p^A,D_{p^A})$ where $p^A$ is an  $A$-point and $D_{p^A}$ is a derivation of $\C^\infty(M)$ at $p^A$ (valuated in $A$). On the other hand, a point of $(M^A)_1^1$ is a $p^A\in M^A$ and a derivation of $\C^\infty(M^A)$ (valuated in $\R$) at $p^A$.

In this way, Weil theorem gives \ref{tangenteApuntos} as the particular case when $B=\R_1^1$.
\end{El teorema es un caso particular del teorema de Weil}

\begin{Producto cartesiano de Apuntos Prolongacion de grupos de Lie y fibrados asociados}

Let  $M$ and $N$ be smooth manifolds and let $A$ be a Weil algebra. Given $A$-points $p^A$ in $M$, $q^A$ in $N$, we have a morphism
\begin{equation*}
\xymatrix {\displaystyle{\A\otimes_\R\B}\ar[rr]^-{p^A\otimes q^A} && A\otimes A}
\end{equation*}
which can be composed with the morphism ``multiplication'' $A\otimes A\to A$ so giving a morphism
$\A\otimes_\R\B\to A$.

This morphism can be prolonged by continuity (or, simply, according to its Taylor expansion) to a morphism:
$$\C^\infty(M\times N)\To A$$
which we will denote by $p^A\times q^A\in (M\times N)^A$, and will be called \emph{cartesian product of the $A$-points} $p^A$, $q^A$. That $A$-point is the only one on $M\times N$ which, when restricted to the subrings $\A$, $\B$ of $\C^\infty(M\times N)$ coincide with $p^A$, $q^A$, respectively.

When $A=\R$, $p\times q$ is the couple $(p,q)\in M\times N$.

When an ``action'' of $M$ on $N$ is given, that is to say, a morphism $M\times N\to N$ (for instance, when $N$ is a principal fiber bundle with structural group $M$), such an action can be lifted to the spaces of  $A$-points by means of the cartesian product:
$$M^A\times N^A\To N^A,\quad (p^A,q^A)\mapsto p^A\times q^A\mapsto p^A\cdot q^A,$$
where $p^A\cdot q^A$ is the image of $p^A\times q^A\in (M\times N)^A$ under the morphism $M\times~N\to~N$.

In particular, if $G$ is a Lie group, $G^A$ is also a Lie group. The identity map of $G^A$ is the morphism ``to take values at the unity $e\in G$'', $f\to f(e)$.  Indeed, for each $A$-point $g^A\in G^A$, the $A$-point in $G\times G$ which coincides with $g^A$ in the first factor (in $\C^\infty(G)\otimes 1$) and coincides with ``to take values at $e$'' in the second one, say $e^A$, has as product $g^A\cdot e^A$ the $A$-point of $G$ which assigns to each $f\in\C^\infty(G)$ the value of $f(\overline g\cdot e)=f(\overline g)$ at $g^A$; so, $g^A\cdot e^A=g^A$.

The inverse element of  $g^A$ is its image by the morphism of manifolds ``inversion'': $G\to G$.
\end{Producto cartesiano de Apuntos Prolongacion de grupos de Lie y fibrados asociados}

\begin{Un ejemplo el grupo tangente a un grupo de Lie}
Let $G$ be a Lie group as above. Its tangent bundle $TG=G_1^1$ is also a Lie group, whose group law  will be detailed  now.

Let us denote by $\Phi\colon G\times G\To G$ the morphism of manifolds given by the multiplication rule. In local coordinates, the image of $(x,y)=(x^1,\dots,x^n,y^1,\dots,y^n)$ is $z=\Phi(x,y)$. Let $p_1^1=(p,D_p)$, $q_1^1=(q,\overline D_q)$ be points of $G_1^1$. The morphism $p_1^1\cdot q_1^1$ sends the coordinate $z^i$ to
\begin{align*}
\Phi^i(x(p_1^1),y(q_1^1))&=\Phi^i(x(p)+\epsilon D_px,\, y(q)+\epsilon\overline D_qy)\\
                         &= \Phi^i(x(p),y(q))\\
                         &\phantom{=\Phi^i(x}+\epsilon
                           \left[\frac{\partial\Phi^i}{\partial x^j}(x(p),y(q))D_px^j+
                           \frac{\partial\Phi^i}{\partial y^k}(x(p),y(q))\overline D_qy^k\right]\\
                         &=z^i(p,q)+\epsilon \left[R_q D_p+L_p\overline D_q\right] z^i
\end{align*}
where $R_q$ is the right translation by $q$ and $L_p$ is the left translation by $p$ on $G$.

It is derived that the multiplication operation on $G_1^1$ is
$$(p,D_p)\cdot (q,\overline D_q)=(p\cdot q,R_qD_p+L_p\overline D_q).$$

The identity element for the multiplication is $(e,0)$. The inverse element of $(p,D_p)$ is:
$(p,D_p)^{-1}=(p^{-1},-L_{p^{-1}}R_{p^{-1}}D_p)$.

 Let us use the fact of $G$ being parallelizable by means of left translations: each $D_p\in T_pG$ is the value at $p$ of a (unique) vector field $D$ of the Lie algebra of $G$: $D_p=L_p\delta$, with $\delta=D_e\in T_eG$. The inverse element of $(p,D_p)$ is, then,
 \begin{equation*}
 (p,D_p)^{-1} =(p^{-1}, -L_{p^{-1}}R_{p^{-1}}L_p\delta)=
     (p^{-1},-L_{p^{-1}}(\textrm{Ad}\,p)(\delta))
  \end{equation*}

  By identifying each vector tangent to $G$ at a point with its associated vector field in the Lie algebra, we have
  \begin{align*}
  (p,\delta)^{-1} &=(p^{-1}, -\left(\textrm{Ad}\,p)\,\delta\right)\\
  (p,\delta)\cdot(q,\overline\delta)&=\left(p\cdot q,(\textrm{Ad}\,q^{-1})\,\delta+\overline\delta\right).
  \end{align*}
\end{Un ejemplo el grupo tangente a un grupo de Lie}
\bigskip

\section{Differentiable structures on the $A$-jet spaces}\label{estructuradiferenciable}

Let $A$ be a Weil algebra of order $\ell$ and width   $m$. For arbitrary $n\ge m$ there exist exhaustive morphisms $\alpha\colon\R_n^\ell\to A$; for instance, if $a^1,\dots,a^m$  are elements of $\m_A$ whose classes $\textrm{mod}\,\m_A^2$ are linearly independent, and $x^1,\dots,x^n$ are elements chosen in $\R_n^\ell$, as above, we can define $\alpha$ by the rule $\alpha(x^i)=a^i$ ($i=1,\dots,m$), $\alpha(x^{m+j})=0$ ($j=1,\dots,n-m$).

\begin{lemaN}\label{lemaepimorfismo}
Once chosen $\alpha\colon \R_n^\ell\to A$, any other epimorphism $\beta\colon \R_n^\ell\to A$ is of the form $\beta=\alpha\circ g$, where $g$ is certain automorphism of $\R_n^\ell$.
\end{lemaN}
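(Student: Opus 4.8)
The plan is to reduce the statement to a question about the images of the generators $x^1,\dots,x^n$ and then to a single linear-algebra adjustment inside $\m_A/\m_A^2$. First I would record the basic correspondence forced by the order of $A$: since $\m_A^{\ell+1}=0$, an $\R$-algebra morphism $\R_n^\ell\to A$ is exactly an arbitrary choice of images in $\m_A$ for $x^1,\dots,x^n$ (any monomial of degree $\ge\ell+1$ in such images lies in $\m_A^{\ell+1}=0$, and conversely the nilpotents $x^i$ must land in the nilradical $\m_A$). Likewise an endomorphism $g$ of $\R_n^\ell$, given by $g(x^i)=u^i\in\m$ (the maximal ideal of $\R_n^\ell$), is an automorphism precisely when the linear parts of the $u^i$ form a basis of $\m/\m^2$, i.e.\ when the $u^i$ generate $\m$: here I use, as in the general notions, that the width equals the minimal number of generators, together with finite dimensionality, so that surjectivity forces bijectivity.

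Next I would translate the goal. Writing $b^i:=\beta(x^i)\in\m_A$, the equation $\beta=\alpha\circ g$ reads, on generators, $\alpha(u^i)=b^i$. Hence it suffices to produce $u^1,\dots,u^n\in\m$ with $\alpha(u^i)=b^i$ and with linearly independent linear parts; then $g(x^i):=u^i$ is the desired automorphism. Surjectivity of $\alpha$, which maps nilpotents to nilpotents and therefore carries $\m$ onto $\m_A$ and $\m^2$ onto $\m_A^2$, provides for each $i$ some $v^i\in\m$ with $\alpha(v^i)=b^i$; the only thing that can fail is the independence of the linear parts, and this is where the work lies.

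Then I would perform the linear adjustment. Let $\pi\colon\m\to\m/\m^2$ be the linear-part projection and $\bar\alpha\colon\m/\m^2\to\m_A/\m_A^2$ the map induced by $\alpha$; it is surjective with kernel $W$ of dimension $n-m$. A short argument identifies $\pi(\ker\alpha\cap\m)=W$: lifting $\xi\in W$ to $w\in\m$ gives $\alpha(w)\in\m_A^2=\alpha(\m^2)$, so subtracting a suitable element of $\m^2$ yields $w'\in\ker\alpha\cap\m$ with the same linear part $\xi$. Thus I may replace each $v^i$ by $u^i:=v^i+k^i$ with $k^i\in\ker\alpha\cap\m$ arbitrary, keeping $\alpha(u^i)=b^i$ while adding to $\pi(v^i)$ an arbitrary vector of $W$. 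Fixing a splitting $\m/\m^2=W\oplus U$ with $\bar\alpha|_U$ an isomorphism onto $\m_A/\m_A^2$, exhaustiveness of $\beta$ makes the classes of the $b^i$ span $\m_A/\m_A^2$, so the $U$-components of the $\pi(v^i)$ span the $m$-dimensional space $U$. Choosing the now-free $W$-components appropriately (zero on an $m$-element index set on which the $U$-components give a basis of $U$, and a basis of $W$ on the remaining $n-m$ indices) makes the $n$ linear parts a basis of $\m/\m^2$.

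Finally, with such $u^i$ in hand, $g(x^i):=u^i$ defines an automorphism of $\R_n^\ell$ with $\alpha\circ g=\beta$, which proves the lemma. The main obstacle is the middle step—arranging the linear parts to be independent—whose solvability rests on the identification $\pi(\ker\alpha\cap\m)=W$ and on the fact that exhaustiveness of $\beta$ forces the fixed $U$-components of the linear parts to span the quotient $\m_A/\m_A^2$.
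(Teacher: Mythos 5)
Your argument is correct: the reduction to finding $u^i\in\m$ (the maximal ideal of $\R_n^\ell$) with $\alpha(u^i)=\beta(x^i)$ and independent linear parts, the identity $\pi(\ker\alpha\cap\m)=W$, and the final basis construction in $W\oplus U$ all hold, and checking $\alpha\circ g=\beta$ on the generators $x^i$ does suffice. Your route is, however, the mirror image of the paper's. The paper takes $\alpha$ in the standard form $\alpha(x^i)=a^i$ ($i\le m$), $\alpha(x^{m+j})=0$, lifts the generators $a^i$ of $A$ through $\beta$ to elements $y^1,\dots,y^m$ (whose classes mod $\m^2$ are automatically independent because their images in $\m_A/\m_A^2$ are), completes them to a generating system $y^1,\dots,y^n$, corrects each added $y^{m+j}$ by a polynomial in $y^1,\dots,y^m$ so that $\beta(y^{m+j})=0$, and defines $g$ by $g(y^i)=x^i$; all the linear algebra is absorbed into the choice of a $\beta$-adapted generating system, and no splitting of $\m/\m^2$ is needed. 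You instead keep the standard generators, lift $\beta(x^i)$ through $\alpha$, and do the work in correcting the lifts inside $\ker\alpha\cap\m$. What your version buys is that it handles an arbitrary $\alpha$ directly, whereas the paper's opening ``take for $\alpha$ the notation as above'' tacitly reduces to the standard $\alpha$ (harmless, since the statement for the standard $\alpha$ gives the general one by composing automorphisms, but left unsaid); the price is the extra bookkeeping with the decomposition $\m/\m^2=W\oplus U$ and the choice of index set, which the paper's adapted generators render invisible.
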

\begin{proof}
Let us take for $\alpha$ the notation as above. Let $y^1,\dots,y^m\in\R_n^\ell$ be such that $\beta(y^i)=a^i$. The classes of $y^1,\dots,y^m\,\,\textrm{mod}\,\m_{\R_n^\ell}^2$ are linearly independent because also so are their images in $\m_A/\m_A^2$. We can add elements $y^{m+1},\dots,y^n$ in $\m_{\R_n^\ell}$ which, joint to $y^1,\dots,y^m$ give a generating system of $\R_n^\ell$; by subtracting from each $y^{m+j}$ a polynomial in $y^1,\dots,y^m$ we can get that $\beta(y^{m+j})=0$ ($j=1,\dots,n-m$). Next, we define $g\colon{\R_n^\ell}\to{\R_n^\ell}$ by the rule $g(y^i)=x^i$. We have $\beta=\alpha\circ g$.
\end{proof}

\begin{La estructura diferenciable de MA}

Let $M$ be a smooth manifold of dimension $n\ge m$. The choice of an epimorphism $\alpha\colon\R_n^\ell\to A$ determines a map
$$\pi_\alpha\colon\check M_n^\ell\To\check M^A,\quad \pi_\alpha(p_n^\ell)=\alpha\circ p_n^\ell,$$
which is exhaustive. In fact, a regular $A$-point factorizes into the composition of passing to the quotient  $\A\to\A/\m_p^{\ell+1}$ and the exhaustive morphism
${ p}^A\colon\A/\m_p^{\ell+1}\to A$. Once chosen any $p_n^\ell\in\check M_n^\ell$, the composition ${ p}^A\circ\left({ p}_n^\ell\right)^{-1}\colon\R_n^\ell\to A$ is, by Lema \ref{lemaepimorfismo}, of the form $\alpha\circ g$, with $g\in\textrm{Aut}\,\R_n^\ell$. It follows that  $p^A=\alpha\circ g\circ p_n^\ell=\pi_\alpha(g\circ p_n^\ell)$.

 Two points $p_n^\ell, \overline p_n^\ell\in\check M_n^\ell$ over the same  $p\in M$ are related in the form $\overline p_n^\ell=g\circ p_n^\ell$, with  $g\in\textrm{Aut}\,\R_n^\ell$ univocally determined by the pair $(p_n^\ell,\overline p_n^\ell)$. We have $\pi_\alpha(\overline p_n^\ell)=\alpha\circ g\circ p_n^\ell$, and, being $p_n^\ell$ exhaustive, the necessary and sufficient for that $\pi_\alpha(\overline p_n^\ell)=\pi_\alpha(p_n^\ell)$  holds, is $\alpha\circ g=\alpha$.

 Let us denote by $G$ the group $\textrm{Aut}\, \R_n^\ell$, $G_\alpha$ the subgroup comprised by those $g\in G$ such that $\alpha\circ g=\alpha$. Then, we have show that the projection $\pi_\alpha\colon \check M_n^\ell\to\check M^A$ is exhaustive and its fibres are the orbits of the subgroup $G_\alpha$ of $G$ when acts on $\check M_n^\ell$.

  When $\check M_n^\ell$ and $\check M^A$ are endowed with the structure of smooth manifold by Theorem \ref{diferenciablesApuntos}, the projection $\pi_\alpha$ is a differentiable map and has maximal rank (corollary \ref{independientesApuntos}).

  Let $U$ be an open subset of $M$ coordinated by functions $x^1,\dots,x^n$. For each $p\in U$, the classes $x^i-x^i(p)\,\textrm{mod}\,\m_p^{\ell+1}$ are generators of $\A/\m_p^{\ell+1}$. The choice of these generators establishes an isomorphism between the algebras $\A/\m_p^{\ell+1}$ and $\R_n^\ell$. When $p$ runs over $U$, the choice of coordinates gives us a section of the fiber bundle $\check U_n^\ell\to U$, precisely the section which sends $p\in U$ to $\xi(p)\colon\C^\infty(U)\to\R_n^\ell$, where $\xi(p)$ is the composition of the pass to the quotient with the isomorphism $\C^\infty(U)/\m_p^{\ell+1}\simeq\R_n^\ell$ established by the choice of coordinates, as said above.

Each point $p_n^\ell\in\check U_n^\ell$ is of the form $p_n^\ell=g\circ\xi(p)$, for certain $g\in G$. Thus, it is established a biunivocal correspondence
$$\check U_n^\ell\simeq U\times G,$$
which is a differentiable isomorphism (as coordinates on  $G$ serve the functions which in each $g\in G$ take the value $x_\alpha^i(g\circ\xi(p))$, $i=1,\dots,n$, $0<|\alpha|\le\ell$).

The projection $\pi_\alpha\colon\check M_n^\ell\to\check M^A$ puts in biunivocal correspondence $\cM^A$ with the set of orbits of $G_\alpha$ when it acts on $\cM_n^\ell$. In the open set $U$:
$$\cU^A\simeq U\times G/G_\alpha,\quad p^A\leftrightarrow (p,[g]_{\textrm{mod}\,G_\alpha})=(p,G_\alpha\cdot g)$$
by means of the choice of coordinates in $U$ and the Weil algebras morphism $\alpha$: $p^A=\alpha\circ g\circ\xi(p)$.

We must to check that both, the smooth structure  which is own of $\cU^A$, given in Theorem \ref{diferenciablesApuntos}, and the one given by the isomorphism of sets $\cU^A\simeq U\times G/G_\alpha$ coincide. Indeed, in the projection $\cU_n^\ell\to \cU^A$, the smooth functions on $\cU^A$ lift to $\cU_n^\ell$ as smooth functions which are invariants under the action of the group $G_\alpha$; for that, they are smooth functions on the space of orbits $U\times G/G_\alpha$. In addition, a family of such functions in $\cU^A$ being functionally independent will remain so when lifted to $\cU_n^\ell$ (corollary \ref{independientesApuntos}) and we get the statement.

Let us consider the $A$-point $p^A=\alpha\circ p_n^\ell\in \cM^A$; its kernel is the jet $\p^A=\textrm{Ker}\,p^A=(p_n^\ell)^{-1}(\textrm{Ker}\,\alpha)$. Any other regular $A$-point in the same $p\in M$ is of the form $\overline p^A=\alpha\circ g\circ p_n^\ell$, with $g\in G$. The jet of $\overline p^A$ is $\overline\p^A=(p_n^\ell)^{-1}g^{-1}(\textrm{Ker}\,\alpha)$. The necessary and sufficient condition for  $\overline\p^A=\p^A$ is that it holds $g^{-1}(\textrm{Ker}\,\alpha)=\textrm{Ker}\,\alpha$, that is to say, that the automorphism $g$ of $\R_n^\ell$ will transform the ideal $\textrm{Ker}\,\alpha$ into itself. Let us denote $I=\textrm{Ker}\,\alpha$ and let $G_I\subseteq G$ be the subgroup comprised by the  $g\in G$ such that $g(I)=I$. We have the inclusions
$$G_\alpha\subseteq G_I\subseteq G=\textrm{Aut}\,\R_n^\ell.$$

Each $g\in G_I$ transforms pairs of congruent elements  $\textrm{mod}\,I$ into pairs of congruent elements $\textrm{mod}\,I$, and therefore, it projects as an automorphism of $A=\R_n^\ell/I$. The condition for  $g$ to be projected to the  identity is  $\alpha\circ g=\alpha$, that is to say, $g\in G_\alpha$. Thus, we have a morphism of groups $G_I\to\textrm{Aut}\, A$ whose kernel is $G_\alpha$. We derive that $G_\alpha$ is a normal subgroup of $G_I$, and we get an injection
$$G_I/G_\alpha\hookrightarrow \textrm{Aut}\, A.$$

Let us see that the previous injection is an isomorphism: let $\widetilde g\in\textrm{Aut}\, A$ and put $\beta:=\widetilde g\circ\alpha$. By Lemma \ref{lemaepimorfismo}, there exists a $g\in G$ such that $\widetilde g\circ\alpha=\alpha\circ g$:
\begin{equation*}
\xymatrix{
\R_n^\ell\ar[r]^-g \ar[d]_-\alpha & \R_n^\ell\ar[d]^-\alpha\\
A\ar[r]_-{\widetilde g} & A
}
\end{equation*}
Obviously, we have  $g\in G_I$, and the image of $g$ in $\textrm{Aut}\, A$ is $\widetilde g$.

Inasmuch  that $A$-points $p^A=\alpha\circ p_n^\ell$, $\overline p^A=\alpha\circ\overline p_n^\ell$ have the same jet if and only if $\overline p_n^\ell=g\circ p_n^\ell$ for a suitable $g\in G_I$, the set of jets of type $A$ in $M$, $\M^A$, is the set of orbits of the group $G_I$ acting on $\cM_n^\ell$. It is also the set of orbits of $\textrm{Aut}\, A$ acting on $\cM^A$.

Let us return to consider the open set $U$ of $M$, coordinated by $x^1,\dots,x^n$. The choice of coordinates and the morphism $\alpha$ give sections $\xi$ of $\cU_n^\ell\to U$ and $\alpha\circ\xi$ of $\cU^A\to U$. By taking the kernel of the last one, we have  a section $\textrm{Ker}\,\circ\alpha\circ\xi$ of $J^A(U)\to U$. By means of the local sections, the maps $\cM_n^\ell\to\cM^A\to \M^A$, restrict themselves,  on the open set $U$, to:
$$\cU_n^\ell\simeq U\times G\To\cU^A\simeq U\times G/G_\alpha\To J^A(U)\simeq U\times G/G_I.$$

We can transport to $J^A(U)$ the differentiable structure of $U\times G/G_I$; with that, the smooth functions on $J^A(U)$ are the smooth functions on $\cU_n^\ell$ that are invariants under the action of $G_I$, which is a closed subgroup of $G$. Such functions, because they are invariants under the  subgroup $G_\alpha$ of $G_I$, are functions on $\cU^A$, invariants under the action of the group $G_I/G_\alpha=\textrm{Aut}\, A$.

By taking a covering of $M$ by coordinated open sets, it is obtained the differentiable structure on $\M^A$.
\end{La estructura diferenciable de MA}

It is classical that (for arbitrary Lie group $G$ and its closed subgroup $G_I$) the morphism of manifolds $G\to G/G_I$ admits local sections which locally trivialize the fibration. Each one of these sections, composed with $\alpha$, give a local section for $\cU^A\to J^AU$, that makes of $\cU^A$ a principal fiber bundle with structural group $\textrm{Aut}\,A=G_I/G_\alpha$ and base manifold $J^AU$. Being it so for each coordinate open  $U$ of $M$, $\cM^A\to \M^A$ is a principal fiber bundle with structural group $\textrm{Aut}\,A$. We can state:
\begin{teo}\label{jetfibradoprincipal}
For each Weil algebra $A$ and each smooth manifold $M$ of dimension $\ge$ width of $A$, $\cM^A$ and $\M^A$ have smooth manifold structure. That of $\cM^A$ is that described in Theorem \ref{diferenciablesApuntos}. That of $\M^A$ is determined by the structure of  $\cM^A$ and the canonical projection $\textrm{Ker}\colon\cM^A\to \M^A$, which makes of $\cM^A$ a principal fiber bundle with base manifold $\M^A$ and structural group $\textrm{Aut}\,A$.

Once chosen an exhaustive morphism $\alpha\colon\R_n^\ell\to A$, on each coordinate open set $U$ of $M$, we have sections $\xi$ of $\cU_n^\ell\to U$, $\alpha\circ\xi$ of $\cU^A\to U$, $\textrm{Ker}\circ\alpha\circ\xi$ of $J^A(U)\to U$ which establish isomorphisms of fiber bundles on $U$:
$$\cU_n^\ell\simeq U\times G,\quad \cU^A\simeq U\times G/G_\alpha,\quad J^A(U)\simeq U\times G/G_I,$$
where $G=\textrm{Aut}\,A$, $G_\alpha=\{g\in G\,|\,\alpha\circ g=\alpha\}$, $G_I=\{g\in G\,|\,g(I)=I\}$, $I=\textrm{Ker}\,\alpha$ and $G_I/G_\alpha=\textrm{Aut}\,A$.
\end{teo}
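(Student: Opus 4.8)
The plan is to assemble the differentiable structures and the bundle assertion from the constructions carried out in the discussion immediately preceding the statement, the two external inputs being Theorem~\ref{diferenciablesApuntos}, which supplies the manifold structures on the $A$-point spaces, and Corollary~\ref{independientesApuntos}, which guarantees that functional independence is preserved under the relevant projections and hence that the structure descended to the jet space is well posed. Since $n=\dim M\ge m$, the width of $A$, exhaustive morphisms $\alpha\colon\R_n^\ell\to A$ exist, so $\cM^A$ is nonempty and, by Theorem~\ref{diferenciablesApuntos}, a dense open subset of $M^A$; this fixes once and for all the smooth structure on $\cM^A$ as the one described there.

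First I would work over a coordinate open set $U\subseteq M$, with coordinates $x^1,\dots,x^n$ and the section $\xi$ of $\cU_n^\ell\to U$ they determine. Writing each $p_n^\ell\in\cU_n^\ell$ as $p_n^\ell=g\circ\xi(p)$ with $g\in G:=\textrm{Aut}\,\R_n^\ell$ gives the trivialization $\cU_n^\ell\simeq U\times G$, the real components $x_\alpha^i$ providing coordinates on $G$. The projection $\pi_\alpha\colon\cM_n^\ell\to\cM^A$ is exhaustive, smooth, and of maximal rank by Corollary~\ref{independientesApuntos}, and its fibres are exactly the $G_\alpha$-orbits; this yields $\cU^A\simeq U\times G/G_\alpha$ and identifies the smooth structure of $\cU^A$ coming from Theorem~\ref{diferenciablesApuntos} with that of the quotient, because the smooth functions on $\cU^A$ lift precisely to the $G_\alpha$-invariant smooth functions on $\cU_n^\ell$ while functional independence is preserved.

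Next I would descend to the jet space. Using the identification $G_I/G_\alpha\simeq\textrm{Aut}\,A$ (with $I=\textrm{Ker}\,\alpha$) already established, together with the fact that two regular $A$-points $\alpha\circ p_n^\ell$ and $\alpha\circ\overline p_n^\ell$ over the same point of $M$ share the same kernel if and only if $\overline p_n^\ell=g\circ p_n^\ell$ for some $g\in G_I$, the map $\textrm{Ker}$ identifies $J^A(U)$ with $U\times G/G_I$, and I would transport the resulting differentiable structure to $J^A(U)$. The step I expect to be the main obstacle is the coherence of this structure: one must check it is independent of the coordinates chosen on $U$ and of the epimorphism $\alpha$, and that the local pieces glue over a covering of $M$. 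This is settled by characterizing the smooth functions on $J^A(U)$ intrinsically as the $G_I$-invariant smooth functions on $\cU_n^\ell$ and invoking Corollary~\ref{independientesApuntos} to preserve functional independence; everything else reduces to bookkeeping with the chain of subgroups $G_\alpha\subseteq G_I\subseteq G$.

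Finally, for the principal bundle claim I would invoke the classical fact that, $G_I$ being a closed subgroup of the Lie group $G$, the projection $G\to G/G_I$ admits local sections that locally trivialize it. Composing such a section with $\alpha$ produces a local section of $\cU^A\to J^A(U)$, exhibiting $\cU^A$ as a principal bundle over $J^A(U)$ with structural group $G_I/G_\alpha=\textrm{Aut}\,A$. Carrying this out over the open sets of a covering of $M$ and gluing shows that $\textrm{Ker}\colon\cM^A\to\M^A$ is a principal $\textrm{Aut}\,A$-bundle, which together with the preceding paragraphs gives all the assertions of the statement.
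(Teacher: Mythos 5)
Your proposal is correct and takes essentially the same route as the paper, whose ``proof'' of this theorem is precisely the construction in the preceding subsection: trivializing $\cU_n^\ell\simeq U\times G$ by the coordinate section $\xi$, descending through the chain $G_\alpha\subseteq G_I\subseteq G$ with Corollary \ref{independientesApuntos} guaranteeing that the quotient smooth structures agree, and obtaining the principal $\textrm{Aut}\,A=G_I/G_\alpha$ bundle from local sections of $G\to G/G_I$ composed with $\alpha$. No substantive difference from the paper's argument.
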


 The projection of manifolds $\cM^A\to \M^A$ gives epimorphisms of tangent spaces $T_{p^A}\cM^A\to T_{\p^A}\M^A$, when $\p^A=\textrm{Ker}(p^A)$; vectors in the kernel of that epimorphism are those tangent to the fiber of $\p^A$, which are the derivations $\in T_{p^A}\cM^A$ that annihilate all the functions $\in\C^\infty(\cM^A)$ vanishing on the fiber; these functions are those of the ideal generated by the real components of the functions $f\in\p^A$. From that and Theorem \ref{tangenteApuntos} it is derived the:
 \begin{teo}\label{tangenteAjets}
 The space $T_{\p^A}\M^A$ and the module $T_{\p^A}M$ are canonically isomorphic as $\R$-vector spaces.
 \end{teo}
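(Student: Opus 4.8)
The plan is to read off $T_{\p^A}\M^A$ from the fibration $\textrm{Ker}\colon\cM^A\to\M^A$, which by Theorem \ref{jetfibradoprincipal} is a principal bundle with group $\textrm{Aut}\,A$ and hence a surjective submersion. Fix a regular $A$-point $p^A$ with $\textrm{Ker}\,p^A=\p^A$; such a point exists, so by Theorem \ref{diferenciablesApuntos} $\cM^A$ is a dense open subset of $M^A$ and $T_{p^A}\cM^A=T_{p^A}M^A$. Theorem \ref{tangenteApuntos} then identifies this tangent space with the module $T_{p^A}M=\Der(\A,A)$, and the isomorphism $\A/\p^A\simeq A$ induced by $p^A$ further identifies it with $\Der(\A,\A/\p^A)$. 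The differential of $\textrm{Ker}$ is therefore a surjection $\Der(\A,\A/\p^A)\to T_{\p^A}\M^A$, and its kernel is the space $V_{p^A}$ of vectors tangent to the fiber through $p^A$.

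The heart of the matter is to show $V_{p^A}=\Der(\A/\p^A,\A/\p^A)$. A tangent vector $\overline D_{p^A}$ is tangent to the fiber precisely when it annihilates every smooth function vanishing on it. Now the fiber is the set of regular $A$-points with kernel $\p^A$, so for each $f\in\p^A$ all of its real components $f_\alpha$ vanish identically along the fiber. By the correspondence $\overline D_{p^A}\leftrightarrow D_{p^A}$ of Theorem \ref{tangenteApuntos}, which satisfies $\overline D_{p^A}f_\alpha=(D_{p^A}f)_\alpha$, the vector $\overline D_{p^A}$ kills all the $f_\alpha$ (for every $f\in\p^A$ and every $\alpha$) if and only if $D_{p^A}f=0$ for all $f\in\p^A$, i.e. $D_{p^A}$ annihilates $\p^A$; equivalently $D_{p^A}$ factors through the quotient $\A\to\A/\p^A$, which is exactly to say $D_{p^A}\in\Der(\A/\p^A,\A/\p^A)$. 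This already yields the inclusion $\Der(\A/\p^A,\A/\p^A)\subseteq V_{p^A}$, and the reverse inclusion holds as soon as the functions $f_\alpha$ ($f\in\p^A$) cut out the fiber.

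Passing to the quotient by $V_{p^A}$ gives
$$T_{\p^A}\M^A\simeq\Der(\A,\A/\p^A)/\Der(\A/\p^A,\A/\p^A)=T_{\p^A}M,$$
the last equality by Definition \ref{modulotangente} (equivalently $T_{\p^A}M=\D(M)/\D(\p^A)$ by Proposition \ref{tangentejets}); the isomorphism is canonical because any two choices of $p^A$ over $\p^A$ differ by an element of $\textrm{Aut}\,A$, whose infinitesimal action lies in $\Der(\A/\p^A,\A/\p^A)$ and so induces the identity on the quotient, which is moreover defined intrinsically from $\p^A$. The one delicate point, and the step I expect to be the main obstacle, is the claim that the $f_\alpha$ generate the full ideal of the fiber, so that annihilating them characterizes verticality; this transversality can be extracted from the explicit local trivialization $\cU^A\simeq U\times G/G_\alpha$ of Theorem \ref{jetfibradoprincipal}, or else bypassed by a dimension count, since $\dim T_{\p^A}\M^A=\dim\M^A=n\dim A-\dim\textrm{Aut}\,A=\dim\Der(\A,\A/\p^A)-\dim\Der(\A/\p^A,\A/\p^A)$ forces $V_{p^A}$ to have exactly the dimension of $\Der(\A/\p^A,\A/\p^A)$, upgrading the inclusion above to an equality.
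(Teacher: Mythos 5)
Your proof is correct and follows essentially the same route as the paper: the paper also reads $T_{\p^A}\M^A$ off the submersion $\cM^A\to\M^A$, identifies $T_{p^A}\cM^A$ with $\Der(\A,A)$ via Theorem \ref{tangenteApuntos}, and identifies the vertical space with the derivations killing the real components of the $f\in\p^A$, i.e.\ with $\Der(\A/\p^A,\A/\p^A)=\Der(A,A)$, the Lie algebra of the structure group $\textrm{Aut}\,A$. The only difference is that you explicitly justify (by trivialization or dimension count) the step the paper merely asserts, namely that the real components of the $f\in\p^A$ cut out the fiber.
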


 \begin{coment}
 For each $\p\in\M^A$, it holds  $\A/\p\simeq A$, and the value at $\p$  of the tangent space to $\p$, $\D(\p)$, is identified with the space  $\textrm{Der}\,(A,A)$, which is the Lie algebra of the group $\textrm{Aut}\,A$. Because  $\M^A$ is the space of orbits of $\textrm{Aut}\,A$ when acts on $\cM^A$, $T_{\p^A}\M^A$ is the quotient of $T_{p^A}\cM^A$ by $\textrm{Der}(A,A)$, and we obtain what was stated by \ref{tangenteAjets}.

 For analogous reasons, a Weil algebra morphism $\alpha\colon A\to B$ which determines a morphism $\check\alpha\colon\cM^A\to\cM^B$ (necessarily, $\alpha$ is exhaustive) passes to $\M^A\to \M^B$ when the ideal $I=\textrm{Ker}\,\alpha$ of $A$ is invariant under the action of the group $\textrm{Aut}\,A$, hence, any derivation $\delta\colon A\to A$ holds $\delta(I)\subseteq I$. For example, that occurs when $A=\R_m^\ell$ and  $B=\R_m^{\ell-1}$. More in general, when $\p$ is any jet in $M$, $A=\A/\p$ and $B=\A/\p'$ (Definition \ref{modulotangente} and Proposition \ref{derivadappprima}).
 \end{coment}

\begin{Prolongacion de ideales}

Let $M$ be a manifold and $A$ a Weil algebra. Fixed a basis $\{a^\alpha\}$ of $A$ as $\R$-vector space, for each $f\in\A$ we define the real components of $f\colon M^A\to A$ by $f(p^A)=f_\alpha(p^A)a^\alpha$. Given an automorphism  $g$ of $A$, expressed in coordinates by $g(a^\alpha)=c_\beta^\alpha a^\beta$, the action of $g$ on $M^A$ gives $f(g(p^A))=f_\alpha(p^A)c_\beta^\alpha a^\beta$, so that $g^*(f_\beta)=c_\beta^\alpha f_\alpha$. It is derived that the ideal of $\C^\infty(M^A)$ generated by the real components of $f$ is invariant under the group $\textrm{Aut}\,A$.

Let us consider the principal fiber bundle $\cM^A\to\M^A$ (\ref{jetfibradoprincipal}). By taking a local section, the specialization to it of the ideal generated by the real components of $f$ is transported to an ideal of the ring of functions of the open set of $\M^A$ where such a section  is defined; this ideal is independent of the chosen section. It follows that the ideals defined by two local sections determine the same ideal on the intersection of the respective open sets where these sections are defined. Therefore, there exists an ideal of $\C^\infty(\M^A)$ that generates in $\C^\infty(\cM^A)$ the same ideal as the ideal of real components of $f$.

Given an arbitrary ideal $I$ of $\A$, the previous process, when applied to each $f\in I$, determines in $\C^\infty(\M^A)$ an ideal, which be called \emph{prolongation of $I$ to $\C^\infty(\M^A)$}. This ideal is characterized by the fact of generating in $\C^\infty(\cM^A)$ the same  ideal as the collection of all the real components of all the $f\in I$.

 Functions does not prolong from $M$ to $\M^A$ in the sense in what they do it to $\cM^A$. But, where appropriate, we can use local sections of $\cM^A\to\M^A$ in order to talking about ``real components'' of $f$ on $\M^A$, and we will handle them in local computations.

\end{Prolongacion de ideales}

\begin{Prolongacion de jets Relacion con los sistemas de contacto}

As any ideal, a jet $\p$ can be prolonged as an ideal to each manifold $\M^A$. For example, if $\p$ is of type $A$, its prolongation to $\C^\infty(\M^A)$ is $\m_\p$, the maximal ideal of the point $\p\in\M^A$. That is a tautology, by taking into account that the fiber of $\cM^A\to \M^A$ on $\p$ consists of the points $p^A\in\cM^A$ where vanish all the $f\in \p$, and no one  $f\in\A$ out of $\p$ has this property.
\end{Prolongacion de jets Relacion con los sistemas de contacto}

Now let us consider a jet $\p'$, of type $A'$, such that  $\p\subseteq\p'$ and a projection $\alpha\colon A\to A'$ such that $\textrm{Ker}\,\alpha$ remains stable under $\textrm{Der}(A,A)$. Then, there exists a map of tangent modules $T_\p M\to T_{\p'}M$ and the contact system $\Omega_{(\p,\p')}$ consists of the submodule $\{d_{\p'}f\,|\,f\in\p\}=d_{\p'}\p$:
\begin{teo}\label{prolongacionycontacto}
Let $\p\subseteq\p'$ be jets of types $A$, $A'$ such that $\textrm{Der}(A,A)$ leaves stable the kernel of the projection $A\to A'$. Then, the contact system $\Omega_{(\p,\p')}$ considered as a subspace of $T^*_{\p'}\M^{A'}$, coincides with the space of differentials of the prolongation of $\p$ to $\p'$. In particular, if $\p'$ is the derived ideal of $\p$, the real components of $\Omega_\p$ are the differentials at $\p'$ of the real components of the prolongation of $\p$ to $\C^\infty(\M^{A'})$ (we have put $A=\A/\p$, $A'=\A/\p'$).
\end{teo}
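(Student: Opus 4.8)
The plan is to reduce the whole statement to the componentwise formula of Theorem \ref{tangenteApuntos} and then to read it off on the base of the principal bundle $\cM^{A'}\to\M^{A'}$. First I would record that the hypothesis, namely that $\Der(A,A)$ stabilizes $I=\textrm{Ker}(A\to A')$, is exactly the condition ensuring the existence of the tangent projection $\pi_*\colon T_\p M\to T_{\p'}M$: this is the content of the discussion following Definition \ref{modulotangente}, where $\Der(A,A)=\Der(\A/\p,\A/\p)$ is carried into $\Der(A',A')$ by the projection $\alpha$, so that the map $T_\p M\to T_{\p'}M$ exists; being induced by the identity on $\D(M)$ (Proposition \ref{tangentejets}), it is surjective. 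Since the Remark following the definition of $d'_\p f$ gives $\langle d'_\p f,\DD_\p\rangle=\langle d_{\p'}f,\pi_*\DD_\p\rangle$ and $\pi_*$ is onto, composition with $\pi_*$ identifies $\Omega_{(\p,\p')}$ with $d_{\p'}\p=\{d_{\p'}f\mid f\in\p\}\subseteq\textrm{Hom}(T_{\p'}M,A')$, where $A'=\A/\p'$. This is the form in which I would compare the contact system with the objects living on $\M^{A'}$.

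Next I would pass to real components. Fix a basis $\{a^\alpha\}$ of $A'$ and a regular $A'$-point $p^{A'}$ over $\p'$. By Theorem \ref{tangenteAjets} the space $T_{\p'}\M^{A'}$ is canonically $T_{\p'}M$, obtained from $T_{p^{A'}}\cM^{A'}\simeq T_{p^{A'}}M$ (Theorem \ref{tangenteApuntos}) by killing the vertical subspace $\Der(A',A')$. Writing $d_{\p'}f=(d_{\p'}f)_\alpha\,a^\alpha$, each real component $(d_{\p'}f)_\alpha$ is an $\R$-linear form on $T_{\p'}M\simeq T_{\p'}\M^{A'}$, that is, an element of $T^*_{\p'}\M^{A'}$; the subspace ``$\Omega_{(\p,\p')}$ viewed in $T^*_{\p'}\M^{A'}$'' is by definition the $\R$-span of these components. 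The decisive identity is then the formula of Theorem \ref{tangenteApuntos}: for the derivation $D_{p^{A'}}$ representing $\DD_{\p'}$ and the associated tangent vector $\overline D_{p^{A'}}$ to $\cM^{A'}$ one has $\overline D_{p^{A'}}(f_\alpha)=(D_{p^{A'}}f)_\alpha$, so that reading the $\alpha$-component of $\langle d_{\p'}f,\DD_{\p'}\rangle=[D_{p^{A'}}f]$ yields $(d_{\p'}f)_\alpha=d(f_\alpha)$, the differential at $p^{A'}$ of the real component $f_\alpha\in\C^\infty(\cM^{A'})$.

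Finally I would descend from $\cM^{A'}$ to $\M^{A'}$ and match ideals. The key remark is that $d(f_\alpha)$ is basic: a vertical vector is $\widetilde\delta=\delta\circ(\A\to A')$ with $\delta\in\Der(A',A')$, and since $f\in\p\subseteq\p'$ one has $f\bmod\p'=0$, whence $\widetilde\delta f=\delta(0)=0$ and $\overline{\widetilde\delta}(f_\alpha)=(\widetilde\delta f)_\alpha=0$. Thus every $d(f_\alpha)$ annihilates $\Der(A',A')$ and descends to $d_{\p'}(f_\alpha)\in T^*_{\p'}\M^{A'}$. By the construction of the prolongation (paragraph ``Prolongation of ideals''), the functions $\{f_\alpha\mid f\in\p\}$ generate in $\C^\infty(\cM^{A'})$ the same ideal as the prolongation $P$ of $\p$ to $\C^\infty(\M^{A'})$, and both ideals vanish at $\p'$ because $f(p^{A'})=0$ for $f\in\p\subseteq\p'$; hence their differentials at $\p'$ span the same subspace of $T^*_{\p'}\M^{A'}$. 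Combining the three steps, the $\R$-span of $\{(d_{\p'}f)_\alpha\}$ is exactly the space of differentials at $\p'$ of $P$, which is the assertion, and the special case $\p'=$ derived ideal of $\p$ gives the second statement verbatim.

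The step I expect to be the main obstacle is the bookkeeping of the second and third paragraphs: translating the $A'$-valued contact form $d_{\p'}f$ into genuinely real cotangent vectors of the manifold $\M^{A'}$. Everything hinges on applying Theorem \ref{tangenteApuntos} componentwise and on checking that the components $d(f_\alpha)$ are basic for $\cM^{A'}\to\M^{A'}$; this last verification is immediate once one uses $f\in\p\subseteq\p'$, and it is precisely what guarantees that the contact system, a priori defined only on $T_\p M$, lands in the cotangent space of the base $\M^{A'}$.
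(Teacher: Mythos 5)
Your proof is correct. Note that the paper itself gives no proof of Theorem \ref{prolongacionycontacto}: the only argument it records is the sentence immediately preceding the statement, namely that the hypothesis on $\Der(A,A)$ produces the tangent projection $T_\p M\to T_{\p'}M$ and hence identifies $\Omega_{(\p,\p')}$ with $d_{\p'}\p$ --- which is precisely your first paragraph. The two further steps that the paper leaves implicit --- reading the real components of $d_{\p'}f$ as the differentials $d(f_\alpha)$ at $p^{A'}$ via Theorem \ref{tangenteApuntos}, checking that these are basic for $\cM^{A'}\to\M^{A'}$ (which uses only $f\in\p\subseteq\p'$), and comparing the span of the $d(f_\alpha)$ with the differentials of the prolongation through the characterization of the prolonged ideal --- are supplied correctly in your write-up; the two points genuinely requiring care, the surjectivity of $\pi_*$ (from $\D(\p)\subseteq\D(\p')$ and Proposition \ref{tangentejets}) and the vanishing at $p^{A'}$ of all generators when equating spans of differentials of equal ideals, are both handled.
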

\bigskip

\section{Differentiable structure of the contact system}
\begin{teo}
\leavevmode
\begin{enumerate}
\item Let $A$, $A'$ be Weil algebras, $\alpha\colon A\to A'$ an epimorphism such that the group $\textrm{Aut}\,A$ leaves stable the ideal $\textrm{Ker}\,\alpha$. Let $\pi_\alpha\colon\M^A\to\M^{A'}$ be the morphism of manifolds associated with $\alpha$. For each $\p\in\M^A$, let $\p'=\pi_\alpha(\p)$ and  $\Omega_{(\p,\p')}\subseteq T^*_\p M$. The real components of  $\Omega_{(\p,\p')}$, lifted to $T^*_\p\M^A$  constitute, when $\p$ runs over $\M^A$, a Pfaff system of class $\C^\infty$ on $\M^A$. In particular, the contact system $\Omega$ on $\M$ has class $\C^\infty$ on each manifold $\M^A$. The rank of $\Omega$ on $\M^A$ is the codimension of $\X^{A'}$ en $\M^{A'}$, where $X$ is any submanifold of dimension $=$ width of $A$ on $M$.
\item If the jets of type  $A$ hold the condition $\widehat\p'\subseteq\p$, the manifolds $\X^A$ are solutions of maximal dimension of the contact system $\Omega$ in $\M^A$.
\item In the conditions of item 2), the Pfaff system $\Omega$ es irreducible on $\M^A$.
\item In the conditions of item 2), in the Taylor  $\M^A\to J_\mu^1(\M^{A'})$ ($\mu=\dim\X^{A'}$, for $\dim X=m$), the contact system on $J_\mu^1(\M^{A'})$ specializes to $\M^A$ as the contact system on $\M^A$.
\end{enumerate}
\end{teo}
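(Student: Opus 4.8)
The plan is to establish item 1 first, then the geometric identity of item 4, and to read off items 2 and 3 from item 4 together with the classical theory of the contact system on a first--order Grassmann bundle. For the smoothness in item 1 I would work on a coordinate open set $U\subseteq M$ with coordinates $x^1,\dots,x^n$ and use the trivialisations of Theorem \ref{jetfibradoprincipal}, which equip $J^A(U)\subseteq\M^A$ with fibre coordinates built from the $x^i_\alpha$. In these coordinates a jet $\p$ is generated by polynomials whose coefficients depend smoothly (indeed polynomially) on the point of $\M^A$, exactly as in the prolongation--of--ideals construction, whose smoothness ultimately rests on Corollary \ref{independientesApuntos}. Applying $d'_\p$ to such a smoothly varying system of generators, passing to real components with respect to a basis of $A'=\A/\p'$, and lifting to $T^*_\p\M^A$ via the identification $T_\p\M^A\simeq T_\p M$ of Theorem \ref{tangenteAjets}, yields covectors depending smoothly on $\p$; this is precisely Theorem \ref{prolongacionycontacto} read pointwise. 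For the rank, Proposition \ref{contactoaniquilado} identifies the annihilator of $\Omega_\p$ in $T_\p M\simeq T_\p\M^A$ with the Cartan module $\C_\p$, so $\operatorname{rank}\Omega=\dim\M^A-\dim\C_\p$ at $\p$. Since $\pi_\alpha\colon\M^A\to\M^{A'}$ is a submersion (its differential is the surjection $T_\p M\to T_{\p'}M$ of Proposition \ref{derivadappprima}) and $\C_\p=\pi_*^{-1}(T_{\p'}X)$ with $\pi_*\C_\p=T_{\p'}\X^{A'}$ by Theorem \ref{proyeccioncontacto}, one gets $\dim\C_\p=\dim\ker\pi_*+\dim\X^{A'}$ and $\dim\M^A=\dim\ker\pi_*+\dim\M^{A'}$; subtracting gives $\operatorname{rank}\Omega=\dim\M^{A'}-\dim\X^{A'}$, the codimension of $\X^{A'}$ in $\M^{A'}$. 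This value is independent of $\p$, which also yields the constant--rank (sub-bundle) property and closes item 1.

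Item 4 is the geometric engine, and I would prove it by matching annihilators. Write $\tau\colon\M^A\to J^1_\mu(\M^{A'})$, $\tau(\p)=(\p',\pi_*\C_\p)$, for the Taylor map, noting that $\pi_*\C_\p=T_{\p'}\X^{A'}$ is a $\mu$--plane. The Cartan distribution of the canonical contact system $\Omega_{J^1}$ on $J^1_\mu(\M^{A'})$ at a point $(\q,L)$ is $\{v\,|\,d\pi_{J^1}(v)\in L\}$, where $\pi_{J^1}\colon J^1_\mu(\M^{A'})\to\M^{A'}$ is the projection. Because $\pi_{J^1}\circ\tau=\pi_\alpha$, the $d\tau$--preimage of this distribution is $\{w\in T_\p\M^A\,|\,\pi_*w\in\pi_*\C_\p\}=\pi_*^{-1}(\pi_*\C_\p)$, which equals $\C_\p$ by Theorem \ref{proyeccioncontacto}. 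As $\Omega_\p$ is the full annihilator of $\C_\p$ (Proposition \ref{contactoaniquilado} gives $\C_\p=\Omega_\p^{\perp}$, hence $\Omega_\p=(\Omega_\p^{\perp})^{\perp}$ in finite dimension), both $\tau^*\Omega_{J^1}$ and $\Omega$ have annihilator $\C_\p$ and therefore coincide, which is item 4.

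Items 2 and 3 then follow from the first--order theory on the Grassmann bundle. For $\p\in\X^A$ one has $\tau(\p)=(\p',T_{\p'}\X^{A'})$, so $\tau(\X^A)=j^1(\X^{A'})$ is the canonical lift of the submanifold $\X^{A'}\subseteq\M^{A'}$, a maximal integral manifold (of dimension $\mu$) of $\Omega_{J^1}$. Since $T_\p\X^A=T_\p X\subseteq\C_\p$, the manifold $\X^A$ is integral for $\Omega$; and any integral manifold $S$ of $\Omega$ is carried by $\tau$ to an integral manifold of $\Omega_{J^1}$, so $\dim S\le\mu=\dim\X^A$, proving item 2. For item 3 I would verify that the Cauchy--characteristic system of $\Omega$ vanishes by transporting through $\tau$ the corresponding property of $\Omega_{J^1}$ on $J^1_\mu(\M^{A'})$.

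The hard point is exactly this maximality (and the transfer in item 3): the pointwise identity of item 4 matches integral elements correctly, but the bound $\dim S\le\mu$ requires $\tau$ to be an immersion, not merely the injection supplied by Theorem \ref{teoTaylor} under $\widehat{\p'}\subseteq\p$. Here $\ker d\tau\subseteq\ker\pi_*\subseteq\C_\p$, so injectivity of $d\tau$ reduces to a vertical computation along the fibres of $\pi_\alpha$; I would obtain it by upgrading Theorem \ref{teoTaylor} to the infinitesimal level, showing that $\widehat{\p'}\subseteq\p$ forces the Gauss map $\p\mapsto\pi_*\C_\p$ to be an immersion on each fibre. Once $\tau$ is known to be an embedding, the reduction to the standard contact system on $J^1_\mu(\M^{A'})$ makes items 2 and 3 routine.
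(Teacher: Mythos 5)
Your treatment of items 1 and 4 is essentially the paper's: for item 1 the paper makes precise your ``smoothly varying generators'' step by showing that the coefficients of the generic polynomial of degree equal to the order of $A$ which lies in $\p$ are cut out by a linear system of constant rank $\dim A$ in the coordinates $x^i_\alpha$, hence depend smoothly (rationally) on $\p$ and are $\textrm{Aut}\,A$-invariant; your rank count via $\C_\p=\pi_*^{-1}(T_{\p'}\X^{A'})$ is the intended one, and your annihilator-matching argument for item 4 is exactly the paper's (terse) proof.

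The genuine gap is in items 2 and 3. First, the bound $\dim S\le\mu$ that you extract from the first-order theory is false: the contact system on $J^1_\mu(\M^{A'})$ is annihilated by every vector vertical over $\M^{A'}$, so the fibres of $J^1_\mu(\M^{A'})\to\M^{A'}$ are integral manifolds of dimension $\mu(\dim\M^{A'}-\mu)$, in general much larger than $\mu$; likewise $\ker\pi_*\subseteq\C_\p$ (Theorem \ref{proyeccioncontacto}) shows the fibres of $\pi_\alpha$ are integral manifolds of $\Omega$ of dimension usually exceeding $\dim\X^A$. So ``maximal'' must be read as non-extendability, which is what the paper proves, and a dimension comparison cannot deliver it. Second, your entire transfer through $\tau$ hinges on $\tau$ being an immersion, which you explicitly leave unproved (``upgrading Theorem \ref{teoTaylor} to the infinitesimal level''); Theorem \ref{teoTaylor} gives only set-theoretic injectivity, and nothing in the paper supplies the infinitesimal version. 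The paper's proof avoids both difficulties: for item 2 it takes a solution $\Z\supseteq\X^A$, shows $\pi_*T_\p\Z=T_{\p'}\X^{A'}$ at points of $\X^A$, applies the rank theorem to $\pi_\alpha|_\Z$ to conclude $\pi_\alpha(\Z)=\X^{A'}$ locally, and then uses Theorem \ref{teoTaylor} set-theoretically to rule out a point of $\Z$ outside $\X^A$; for item 3 it invokes Cartan's theorem that maximal solutions are generated by characteristics and then exhibits, in coordinates adapted to $\p$, for every nonzero $\DD_\p$ tangent to an $m$-manifold $X$ through $\p$, another $m$-manifold $Y$ (of equations $y^1=f$, $y^2=0,\dots,y^r=0$ with $f\in\p$, $Df\notin\p$) through $\p$ to which $\DD_\p$ is not tangent. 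You would need either to reproduce these direct arguments or actually to prove the immersivity of $\tau$ and replace the dimension bound by a maximality-by-containment statement for $j^1(\X^{A'})$ before items 2 and 3 can be considered established.
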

\begin{proof}
\leavevmode
\begin{enumerate}

\item
By taking into account Theorem \ref{prolongacionycontacto}, the problem reduces itself to prove that, locally on $\M^A$, there is a finite family of polynomials $P_\nu(x^1,\dots,x^n)$ in the local coordinates $x^i$ of $M$, with coefficients in $\C^\infty(\M^A)$ such that, when their numerical values are fixed at each $\p$, it is obtained a collection of generators of $\p$ as an ideal of $\A$.

Let us consider the generic polynomial  $P(\lambda,x)$ of degree $=$ the order of $A$, where $\lambda$ is the set of undetermined coefficients of the monomials in the $x^1,\dots,x^n$. Given a point $p^A\in\cM^A$, the condition for that the assignation of numerical values $\lambda(p^A)$ to the coefficients of $P$ to give $P(\lambda(p^A),x)$ at $\p=\textrm{Ker}(p^A)$ is that the coefficients $\lambda(p^A)$ to hold a system of linear equations (one for each $a^\alpha$ of a given basis of $A$ as $\R$-vector space) whose coefficients are given polynomials in the $x_\alpha^i(p^A)$ with constant coefficients. The rank of this system on $\cM^A$ is constant, equal to  $\dim A$. For that reason, the solutions $\lambda(p^A)$ are (rational ) functions of class $\C^\infty$ in the $x_\alpha^i$, linearly depending of a finite number of parameters which, when fixed, give each element of a basis of  $\p$ as an $\R$-vector space. Such functions are, obviously, invariant under  $\textrm{Aut}\,A$, so that they are functions on $\M^A$, and we conclude the argument.

\item
If we go back to the definitions we see that a  jet $\p\in\X$ is a jet of $\M$ such that $X$ passes trough $\p$ (\ref{terminologia}); a tangent vector $\DD_\p\in T_\p X$ is a vector  $\DD_\p\in T_\p M$ which is tangent to $X$ at $\p$ (Section \ref{terminologia2}); and $T_\p X$ is contained in the Cartan system $\C_\p$ for any jet of width $=\dim X$ (Definition  \ref{defcontacto}). For that, $\X^A$ is a solution of the contact system $\Omega$ de $\M$, for arbitrary Weil algebras $A$ of width $m=\dim X$ (Definition \ref{defcontacto}).

 Let $\Z\subset\M^A$ be a solution of $\Omega$ that contains  $\X^A$. For each $\p\in\X^A$ it holds $T_\p\Z\subseteq\C_\p$; thus, in the projection
 $\pi_*\colon T_\p M\to T_{\p'}M$, we have
 $\pi_* T_\p Z\subseteq\pi_*\C_\p=T_{\p'}\X^{A'}$ (we use the identification of $T_\p M$ with $T_\p\M^A$ given in \ref{tangenteAjets} and we have put $A'=\A/\p'$). As the converse inclusion holds, because $\X^A\subseteq\Z$, we have $\pi_*T_\p\Z=T_{\p'}\X^{A'}$. Therefore, in a neighborhood of $\p$ in $\Z$, the rank of $\pi_*$ at each $T_\q\Z$ is $\ge\dim\X^{A'}$, so that $=\dim\X^{A'}$ by Proposition \ref{proyeccioncontacto1}. Then,  $\pi_*T_\q\Z$ has constant dimension in a neighborhood of $\p$. By the rank theorem, $\pi\Z$ (in a neighborhood of $\p'$) is a manifold of dimension equal to that of  $\X^{A'}$, thereupon $\pi\Z=\X^{A'}$ in that neighborhood.

 Let us assume that (in the considered open set) there exists a  $\overline\q\in\Z$ out of $\X^A$. Its image in $\X^{A'}$ would be a point $\q'$ image of some $\q\in\X^A$. Then, the manifold $X$ passes trough $\q'$ but not through $\overline\q$; thus, by Theorem \ref{teoTaylor} and Proposition \ref{proyeccioncontacto1}, it should be  $T_{\q'}X\ne\pi_*\C_{\overline q}=\pi_*\Z_{\overline\q}$, against that proved before. That contradiction finishes the proof.

\item It is classical (Cartan) that the manifolds which are solutions of maximal dimension of a Pfaff system are generated by manifold-solutions of the characteristic system. In order to see that $\Omega$ is irreducible on $\M^A$, it is enough, then, to see that the solutions of maximal dimension of $\Omega$ passing through a given $\p$ have the zero vector as the only common tangent vector. Then, by item 2), it is sufficient to show that, given a manifold $X$ of dimension $m$ passing through $\p$, for each tangent field  $D$ on $M$ being tangent to $X$ at $\p$, with  $\DD_\p\ne 0$, there exists a manifold $Y$ of dimension $m$ which passes through  $\p$ but such that $\DD_\p$ is not tangent to $Y$. For that, we take local coordinates $x^1,\dots,x^m,y^1,\dots,y^r$ on $M$ adapted to $X$, in such a way that $I(X)=(y^1,\dots,y^r)$, and $\p$ is of the form $(y^1,\dots,y^r)+\m_p^{\ell+1}+(Q_h(x))$. By eliminating terms vanishing on $T_\p M$, a field $D$ tangent to $X$ at $\p$ can be written locally as a linear combination of the $\partial/\partial x^\alpha$ with coefficients which are polynomial in the $x$'s. If $\DD_\p\ne 0$, there exists $f\in\p$ such that $Df\notin\p$. The manifold defined by equations  $y^1=f,y^2=0,\dots,y^r=0$ passes through  $\p$ and  so $\DD$ is not tangent to $Y$ at $\p$.

\item From \ref{contactoaniquilado} and \ref{proyeccioncontacto}, $\Omega$ on $\M^A$ is, at each $\p\in\M^A$, the lifting, by ``pull-back'' of the forms on $\M^{A'}$ which annihilate  $T_{\p'}X^{A'}$, for any  $m$-dimensional submanifold $X$ of $M$ passing through $\p$. The same occurs in the image of $\p$ in $J_\mu^1(\M^{A'})$ with the contact system of that manifold (\ref{orden1}), and thus we conclude.
\end{enumerate}

\end{proof}



\end{document}